\documentclass[bibtex,12pt,en]{elegantpaper}
\usepackage{extarrows}
\usepackage{mathrsfs}
\numberwithin{equation}{section}
\allowdisplaybreaks[4]
\everymath{\displaystyle}

\title{Liouville theorem of the subcritical biharmonic equation on complete manifolds}

\author{Xi-Nan Ma \and Tian Wu \and Wangzhe Wu}

%\addbibresource[location=local]{reference.bib}

\begin{document}

\date{}
\maketitle

\renewcommand{\thefootnote}{\fnsymbol{footnote}}

%\footnotetext[1]{The corresponding author.}

\begin{abstract}

    In this paper, we study the subcritical biharmonic equation
    \[\Delta ^2 u=u^\alpha\]
    on a complete, connected, and non-compact Riemannian manifold \((M^n,g)\) with nonnegative Ricci curvature. Using the method of invariant tensors, we derive a differential identity to obtain a Liouville theorem, i.e., there is no positive \(C^4\) solution if \(n\geqslant5\) and \(1<\alpha<\frac{n+4}{n-4}\). We establish a crucial second-order derivative estimate, which is established via Bernstein's technique and the continuity method.
\end{abstract}

\tableofcontents
\section{Introduction}\label{sec:intro}

Let \(n\geqslant5\) and \((M^n,g)\) be a complete, connected, and non-compact Riemannian manifold of dimension \(n\) with nonnegative Ricci curvature. Let \(\nabla\) and \(\Delta\) be the Levi-Civita connection and the Laplace-Beltrami operator w.r.t. the metric \(g\). Denote the Ricci curvature as \(\mathrm{Ric}\).

We investigate positive solutions to the fourth-order subcritical equation
\begin{equation}\label{eq}
    \Delta^2u=u^\alpha\quad\text{in }(M^n,g).
\end{equation}
In this paper, we focus on the subcritical case \(1<\alpha<\frac{n+4}{n-4}\). With the help of the invariant tensor technique, we establish a crucial differential identity, implying the following Liouville theorem.
\begin{theorem}\label{thm:subcritical}
    Let \(n\geqslant5\) and \((M^n,g)\) be a complete, connected, and non-compact Riemannian manifold of dimension \(n\) with nonnegative Ricci curvature. If \(1<\alpha<\frac{n+4}{n-4}\) and \(\Delta u\leqslant0\), then equation \eqref{eq} admits no positive \(C^4\) solution.
\end{theorem}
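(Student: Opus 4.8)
The plan is to argue by contradiction, so suppose $u>0$ is a $C^{4}$ solution of \eqref{eq} with $\Delta u\leqslant 0$. Setting $v:=-\Delta u\geqslant 0$ turns the equation into the cooperative system $-\Delta u=v$, $-\Delta v=u^{\alpha}$. First I would rule out $v\equiv 0$: since $v$ is nonnegative and superharmonic, the strong maximum principle forces either $v>0$ everywhere or $v\equiv 0$; in the latter case $u$ is a positive harmonic function on a complete non-compact manifold with $\mathrm{Ric}\geqslant 0$, hence constant by the Cheng--Yau gradient estimate, contradicting $u^{\alpha}=\Delta^{2}u=0$. Thus we may assume $u,v>0$ on all of $M$.

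The core of the proof is the a priori estimate announced in the abstract: a pointwise bound of the form $-\Delta u=v\leqslant C\,u^{(\alpha+1)/2}$ (together with companion gradient bounds for $|\nabla u|$ and $|\nabla v|$ of matching scaling, e.g.\ $|\nabla u|^{2}\leqslant C\,u^{(\alpha+3)/2}$), with $C=C(n,\alpha)$; all exponents are forced by the scaling $u\mapsto\lambda^{4/(\alpha-1)}u(\lambda\,\cdot)$ under which \eqref{eq} is invariant. I would establish this by Bernstein's technique: form an auxiliary function such as $F:=v\,u^{-(\alpha+1)/2}$ (or a related combination of $|\nabla^{2}u|^{2}$ and lower-order terms), multiply by a cutoff $\eta$ supported in a geodesic ball $B_{2R}$, and apply the maximum principle to $\eta F$; computing $\Delta F$ from the two equations of the system, invoking the Bochner formula, and discarding the Ricci term — this is where $\mathrm{Ric}\geqslant 0$ enters — one obtains at the maximum point of $\eta F$ a differential inequality in which a negative term of the correct weight dominates the cross-terms in $\nabla u,\nabla v,\nabla^{3}u$, provided $1<\alpha<\frac{n+4}{n-4}$. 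Letting $R\to\infty$, while controlling the cutoff error terms (Bishop--Gromov volume growth under $\mathrm{Ric}\geqslant 0$ keeps these terms bounded), yields the global bound. The continuity method is used to make this rigorous — running the maximum-principle argument on a deformed problem where the supremum is a priori attained and then showing the bound persists along the deformation. I expect this step to be the main obstacle: tracking the exact constants in the Bernstein inequality is delicate, and it is precisely here that the subcritical threshold $\frac{n+4}{n-4}$ first appears.

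Granting the a priori estimate, I would bring in the invariant-tensor machinery. For a suitably chosen power $\phi=u^{\theta}$ one considers the traceless tensor
\[
E_{ij}:=\nabla_{i}\nabla_{j}\phi-\frac{\Delta\phi}{n}\,g_{ij}+(\text{lower-order terms built from }\nabla\phi\otimes\nabla\phi,\ u,\ v),
\]
tuned so that $E_{ij}\equiv 0$ characterizes the model Euclidean solution. Using the system, the Ricci identity for commuting covariant derivatives, the Bochner formula with the Ricci term dropped, and integration by parts against $\eta^{2}$, one derives the differential identity
\[
\int_{M}\eta^{2}\,|E_{ij}|^{2}\,d\mu\;\leqslant\;c(n,\alpha)\int_{M}\eta^{2}\,Q\,d\mu\;+\;\int_{M}(\text{cutoff errors})\,d\mu,
\]
where $Q\geqslant 0$ is a sign-definite density, $c(n,\alpha)\leqslant 0$ exactly in the subcritical range $\alpha<\frac{n+4}{n-4}$, and the cutoff errors are estimated using the a priori bounds, $\mathrm{Ric}\geqslant 0$, and volume growth.

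Letting $R\to\infty$ then forces both $E_{ij}\equiv 0$ and $Q\equiv 0$ on $M$. Unwinding $E_{ij}\equiv 0$, the traceless Hessian of $\phi$ vanishes, i.e.\ $\nabla^{2}\phi=\frac{\Delta\phi}{n}g$; by the Tashiro/Obata-type rigidity this means either $\phi$ — hence $u$ — is constant, which is impossible since $\Delta^{2}u=u^{\alpha}>0$, or $(M,g)$ is a model space on which the admissible solutions are explicit profiles satisfying $\Delta^{2}u=u^{\alpha}$ only when $\alpha=\frac{n+4}{n-4}$. Either alternative contradicts $1<\alpha<\frac{n+4}{n-4}$, so no positive $C^{4}$ solution with $\Delta u\leqslant 0$ exists, which is the assertion of Theorem~\ref{thm:subcritical}.
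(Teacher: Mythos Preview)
Your outline has the right three movements --- pointwise second-order estimate, then a differential identity from invariant tensors, then an integral estimate with cutoffs --- but the concrete content of each step diverges from the paper in ways that matter.

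The a priori estimate the paper actually proves is not $v\leqslant Cu^{(\alpha+1)/2}$ or $|\nabla u|^2\leqslant Cu^{(\alpha+3)/2}$; it is the scale-free inequality $\Delta u+\frac{2}{n-4}\frac{|\nabla u|^2}{u}\leqslant 0$ (Proposition~\ref{prop:est}), obtained by a continuity argument in the parameter $a$ on $Z_a=u^{-1}\Delta u+au^{-2}|\nabla u|^2$, using only $\Delta^2u\geqslant 0$. Its purpose is not to control cutoff errors, as you suggest, but to make the quadratic form in the differential identity nonnegative: the coefficient of $E_{ij}E^{ij}+R_{ij}u^iu^j$ in \eqref{identity} is $c_1\frac{|\nabla u|^2}{u^2}-c_2\frac{\Delta u}{u}$, and one needs $-\Delta u\geqslant\frac{2}{n-4}\frac{|\nabla u|^2}{u}$ to bound this below by a positive multiple of $\frac{|\nabla u|^2}{u^2}$ and obtain the positive-definite $3\times 3$ matrix $A$ in \eqref{ineq:id-1}. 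Moreover, the identity itself involves not just $E_{ij}$ but two further invariants $F_j$ (built from $\nabla\Delta u$) and $G$ (built from $\Delta^2u$); your sketch with a single traceless Hessian tensor misses this hierarchy, which is essential for a fourth-order equation.

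The endgame is also different, and your version has a gap. The paper does not deduce $E_{ij}\equiv 0$ and then invoke Tashiro--Obata rigidity. After testing \eqref{inequality} against $\eta^\gamma$ and running Lemmas~\ref{lem:int-1}--\ref{lem:int-2}, one arrives at $\int_M u^{-\frac{2\alpha}{n+4}-4}|\nabla u|^6\eta^\gamma\leqslant CR^{-6}\int_{B_{2R}}u^{2-\frac{2\alpha}{n+4}}$. The right-hand side is \emph{not} killed by any pointwise bound on $u$ or $v$ (indeed, no such bound is available); it is killed by an absorption trick. Because the left side of \eqref{int:3} also controls $R^{-2}\int u^{1-\frac{2\alpha}{n+4}}\Delta^2u=R^{-2}\int u^{1+\alpha-\frac{2\alpha}{n+4}}$, Young's inequality swallows $R^{-6}\int u^{2-\frac{2\alpha}{n+4}}$ into this term plus $CR^{n-\frac{1}{\alpha-1}(\frac{6n+16}{n+4}\alpha+2)}$, and it is precisely the subcritical condition $\alpha<\frac{n+4}{n-4}$ together with Bishop--Gromov that makes this residual exponent negative. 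Letting $R\to\infty$ then forces $\nabla u\equiv 0$ directly, so $u$ is constant and $u^\alpha=\Delta^2u=0$ gives the contradiction; no rigidity theorem is needed. Your plan to ``estimate the cutoff errors using the a priori bounds'' would not close this loop, because nothing in your list of pointwise estimates controls $\int_{B_{2R}}u^{2-\frac{2\alpha}{n+4}}$.
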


Related second-order semilinear equations in complete, connected, and non-compact Riemannian manifolds with nonnegative Ricci curvature have been widely studied. Gidas-Spruck \cite{GS81} proved non-existence of positive solution to \(-\Delta u=u^{\alpha}\) in subcritical case \(1<\alpha<\frac{n+2}{n-2}\) with \(n\geqslant 3\). The critical case \(\alpha=\frac{n+2}{n-2}\) is more difficult. Caffarelli-Gidas-Spruck \cite{CGS89} (also see Chen-Li \cite{CL91}) classified positive solutions via the moving plane method in \(\mathbb R^n\). The rigidity result in the critical case, that is, the manifold must be \(\mathbb R^n\), was investigated under some assumptions of decay at infinity or integrable condition, such as Fogagnolo-Malchiodi-Mazzieri \cite{FMM23}, Catino-Monticelli \cite{CM22}. Recently, to remove assumptions, some works succeeded in lower dimensions, such as Catino-Monticelli-Roncoroni \cite{CMR23}, Ou \cite{Ou25}, V\'etois \cite{Vet24}, Sun-Wang \cite{SW25}.

Let's focus on the fourth-order case. Lin \cite{Lin98} classified solutions to \(\Delta^2u=\mathrm e^{4u}\) in \(\mathbb R^4\) and positive solutions to \(\Delta^2u=u^\alpha\) in \(\mathbb R^n\) with \(n\geqslant5\) and \(1<\alpha\leqslant\frac{n+4}{n-4}\) by the moving plane method. However, moving planes doesn't work in general manifolds. It's noteworthy that corresponding problems on closed manifolds with a positive lower Ricci curvature bound have been studied recently. We recommend readers to see V\'etois \cite{Vet24}, Li-Wei \cite{LW25}, Case \cite{Cas24}, Ma-Wu-Zhou \cite{MWZ25a,MWZ25b}.

Hence, in Theorem \ref{thm:subcritical}, we extend Lin's Liouville theorem in the subcritical case to complete non-compact manifolds. Unlike backgrounds from conformal geometry in \cite{Vet24,LW25}, we establish differential identities using the invariant tensor technique. This technique was introduced by Ma-Wu \cite{MW24} to study second-order semi-linear equations on manifolds. Recently, Ma-Wu-Zhou \cite{MWZ25a} extended Vetois's and Li-Wei's result on closed manifolds via this technique.

In local coordinates, we denote the metric tensor as \(g_{ij}\), and the Ricci curvature tensor as \(R_{ij}\). Denote the inner product w.r.t. \(g\) as \(\langle\cdot,\cdot\rangle\). We employ the Einstein summation convention to raise and lower the indices using the metric tensor \(g_{ij}\) and its inverse \(g^{ij}\). All covariant derivatives are denoted with a comma, such as \(_{,i}\), except for acting on our solution \(u\).

Throughout this paper, we assume \(R>1\). The constant \(\varepsilon\) and \(C\) denote positive constants independent of \(R\). Besides, \(\varepsilon\) is always sufficiently small. Denote the geodesic ball centered at \(o\) with radius \(R\) as \(B_R\). Let \(\eta\) be a smooth cutoff function supported on \(\overline{B_{2R}}\) satisfying
\[0\leqslant\eta\leqslant 1, \quad\eta\equiv 1,\quad\text{in } B_R,\quad\text{and}\quad |\nabla^k\eta|\leqslant CR^{-k}\text{ for }k=1,2.\]

In Section \ref{sec:estimate}, we establish the prior estimate of second derivatives via Berstein's technique and the continuity method. In Section \ref{sec:identity}, the crucial differential identity is obtained by the invariant tensor technique. In Section \ref{sec:subcritical}, using the differential identity, we establish integral estimates to prove Theorem \ref{thm:subcritical}.
\section{An estimate of second order derivatives}\label{sec:estimate}

In this section, we establish a prior estimate of second-order derivatives for a positive function \(u\) satisfying \(\Delta ^2u\geqslant0\). This estimate is crucial for verifying the positivity of the differential identity.

By iteration arguments, Fazly-Wei-Xu \cite{FWX15} deduced the estimate
\[\Delta u+\frac{2}{n-4}\frac{|\nabla u|^2}{u}\sqrt{\frac{2}{p+1-c_n}}|x|^{\frac a 2}u^{\frac{p+1}{2}}\leqslant0,\quad c_n=\frac{8}{n(n-4)}\]
for the fourth-order H\'enon equation \(\Delta^2u=|x|^au^p\) in \(\mathbb R^n\). Ng\^o-Nguyen-Phan \cite{NNP18} studied the estimate of the equation with negative exponent, namely \(\Delta^2u=-u^{-q}\) in \(\mathbb R^n\), and obtained that
\[-\Delta u+\alpha\frac{|\nabla u|^2}{u}+\beta u^{-\frac{q-1}{2}}\]
with some conditions of \((\alpha,\beta)\). Without concerning the power term of \(u\), we obtain a simple way to establish the estimate on complete manifolds by Bernstein's technique and the continuity method. 

Set \(Z_a=u^{-1}\Delta u+ au^{-2}|\nabla u|^2\), \(a\geqslant 0\). Then
\[\nabla Z_a=-u^{-2}\Delta u\nabla u+u^{-1}\nabla \Delta u-2au^{-3}|\nabla u|^2\nabla u+2au^{-2}\nabla^2u\cdot\nabla u.\]
\begin{align*}
    &u^{2a-2}\operatorname{div}(u^{2-2a}\nabla Z_a)\\
    =~&\frac{\Delta^2 u}{u}-\frac{(\Delta u)^2}{u^2}+2a(1+2a)\frac{|\nabla u|^4}{u^4}+2a\operatorname{Ric}(\frac{\nabla u}{u},\frac{\nabla u}{u})\\
    &-4a(1+a)\frac{\langle\nabla^2 u,\nabla u\otimes\nabla u\rangle}{u^3}+2a\frac{|\nabla^2u|^2}{u^2}\\
    =~&2a\Big|\frac{\nabla^2u}{u}-(1+a)\frac{\nabla u\otimes\nabla u}{u^2}-\frac 1 n\Big(\frac{\Delta u}{u}-(1+a)\frac{|\nabla u|^2}{u^2}\Big)g\Big|^2+\frac{\Delta^2 u}{u}+2a\operatorname{Ric}(\frac{\nabla u}{u},\frac{\nabla u}{u})\\
    &+(\frac 2 n a-1)\frac{(\Delta u)^2}{u^2}-\frac 4 n a(1+a)\frac{\Delta u|\nabla u|^2}{u^3}+\frac 2 n a[(1+a)^2-na^2]\frac{|\nabla u|^4}{u^4}.
\end{align*}
By non-negativity of Ricci curvature, and \(\Delta^2u\geqslant0\), we obtain
\begin{align}
    \begin{split}\label{est:1}
        &u^{2a-2}\operatorname{div}(u^{2-2a}\nabla Z_a)\\
        \geqslant~&(\frac 2 n a-1)\Delta uZ_a-\frac a n(6a-n+4)\frac{|\nabla u|^2}{u^2}Z_a+\frac a n(1+2a)[2-(n-4)a]\frac{|\nabla u|^4}{u^4}.        
    \end{split}
\end{align}

The positivity of \eqref{est:1} ensures an estimate on \(Z_a\).

\begin{lemma}\label{lem:est}
 Assume that \(Z_a\eta^4\) attains its maximum at \(x_0\in B_{2R}\). If
 \begin{equation}\label{est:2}
        \operatorname{div}(u^{2-2a}\nabla Z_a)\geqslant\delta u^{2-2a}\Big(Z_a^2+\frac{|\nabla u|^4}{u^4}\Big).
 \end{equation}
    holds at \(x_0\) for some \(\delta>0\) depending on \(n,a\), then \(Z_a\leqslant0\).
\end{lemma}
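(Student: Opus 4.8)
The plan is to run a Bernstein-type maximum-principle argument, using only \eqref{est:2} as a pointwise input at $x_0$ (the hypotheses $\operatorname{Ric}\geqslant0$ and $\Delta^2u\geqslant0$ played their role already in deriving \eqref{est:1}, and are no longer needed). Since $u\in C^4$, the function $Z_a$ is $C^2$, hence so is $w:=Z_a\eta^4$, and by hypothesis $w$ attains its maximum at the interior point $x_0\in B_{2R}$. If $w(x_0)\leqslant0$ then already $\sup_{B_R}Z_a\leqslant0$ because $\eta\equiv1$ on $B_R$, so we may assume $w(x_0)>0$, whence $\eta(x_0)>0$. I would first record the two maximum conditions at $x_0$: the vanishing of the gradient, $\nabla w=0$, which rearranges to
\[\nabla Z_a=-\frac{4Z_a}{\eta}\nabla\eta\qquad\text{at }x_0,\]
together with the concavity inequality $\Delta w\leqslant0$.

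Expanding $\Delta w$ by the product rule and $\Delta(\eta^4)=12\eta^2|\nabla\eta|^2+4\eta^3\Delta\eta$, and using the gradient relation to eliminate $\nabla Z_a$, the inequality $\Delta w(x_0)\leqslant0$ becomes $\eta^4\Delta Z_a\leqslant20\eta^2Z_a|\nabla\eta|^2-4\eta^3Z_a\Delta\eta$ at $x_0$. On the other hand, writing $\operatorname{div}(u^{2-2a}\nabla Z_a)=u^{2-2a}\bigl(\Delta Z_a+(2-2a)u^{-1}\nabla u\cdot\nabla Z_a\bigr)$, dividing \eqref{est:2} by $u^{2-2a}>0$, and again substituting the gradient relation, one obtains $\Delta Z_a\geqslant\delta\bigl(Z_a^2+|\nabla u|^4u^{-4}\bigr)+8(1-a)Z_a\eta^{-1}u^{-1}\nabla u\cdot\nabla\eta$ at $x_0$. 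Multiplying by $\eta^4\geqslant0$ and combining with the previous bound yields the key inequality
\[\delta\eta^4\Bigl(Z_a^2+\frac{|\nabla u|^4}{u^4}\Bigr)\leqslant Z_a\Bigl(20\eta^2|\nabla\eta|^2+4\eta^3|\Delta\eta|+8|1-a|\,\eta^3\,\frac{|\nabla u|}{u}\,|\nabla\eta|\Bigr)\qquad\text{at }x_0,\]
where $Z_a(x_0)>0$ was used.

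Then I would absorb the right-hand side into the left. With $0\leqslant\eta\leqslant1$, $|\nabla\eta|\leqslant CR^{-1}$ and $|\nabla^2\eta|\leqslant CR^{-2}$, the first two terms on the right are each bounded by $\tfrac14\delta\eta^4Z_a^2+CR^{-4}$ via Young's inequality (using $\eta^3\leqslant\eta^2$). The third term is the delicate one: I would write it as $(\eta^2Z_a)\cdot(\eta\,|\nabla u|/u)\cdot|\nabla\eta|$ and apply the three-factor Young inequality with exponents $2,4,4$, tuning the parameters so that the absorbed pieces total at most $\tfrac14\delta\eta^4Z_a^2+\tfrac14\delta\eta^4|\nabla u|^4u^{-4}$ and the leftover is $\leqslant CR^{-4}$. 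The key inequality then collapses to $\eta^4(x_0)Z_a^2(x_0)\leqslant CR^{-4}$ with $C=C(n,a,\delta)$ independent of $R$; since $\eta^4\leqslant\eta^2$ this gives $w(x_0)=Z_a(x_0)\eta^4(x_0)\leqslant\sqrt{C}\,R^{-2}$, hence $\sup_{B_R}Z_a\leqslant\sqrt{C}\,R^{-2}$. Letting $R\to\infty$ with $o$ fixed forces $Z_a\leqslant0$ on $M$.

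The \emph{main obstacle} is exactly the cross term $\eta^3Z_a(|\nabla u|/u)|\nabla\eta|$: it appears because the drift part of the weighted operator $\operatorname{div}(u^{2-2a}\nabla\,\cdot\,)$ is nonzero when $a\neq1$, and it cannot be controlled by the $Z_a^2$ term alone---this is precisely why \eqref{est:2} is stated with the extra $|\nabla u|^4/u^4$ on its right-hand side, which provides the room to absorb it. When $a=1$ the weight is trivial, this term disappears, and the argument reduces to the classical Bernstein computation. The only remaining points to check are that $x_0$ is a genuine interior maximum so the Hessian test is legitimate (guaranteed by $x_0\in B_{2R}$ being interior together with $w(x_0)>0$), and that the constant $C$ above is genuinely $R$-independent, so that the limit $R\to\infty$ is valid.
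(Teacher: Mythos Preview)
Your proof is correct and follows essentially the same Bernstein-type maximum-principle argument as the paper: both expand the weighted operator at the maximum point $x_0$, use the first-order condition $\eta^4\nabla Z_a=-4Z_a\eta^3\nabla\eta$ to eliminate $\nabla Z_a$, apply \eqref{est:2}, and absorb the resulting cross term $R^{-1}\eta^3 Z_a\,|\nabla u|/u$ via Young's inequality against the $|\nabla u|^4/u^4$ piece, arriving at $Z_a\eta^4(x_0)\leqslant CR^{-2}$ and letting $R\to\infty$. The only cosmetic difference is that the paper applies $u^{2a-2}\operatorname{div}(u^{2-2a}\nabla\,\cdot\,)$ directly to the product $Z_a\eta^4$ (noting it equals $\Delta(Z_a\eta^4)$ at the critical point), whereas you split off $\Delta Z_a$ and the drift separately; the resulting key inequality and its absorption are identical.
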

 
\begin{proof}
 At \(x_0\), we have \(\eta^4\nabla Z_a+4Z_a\eta^3\nabla \eta=\nabla(Z_a\eta^4)=0\), and
 \begin{align*}
  0 &\geqslant\Delta(Z_a\eta^4)=u^{2a-2}\operatorname{div}[u^{2-2a}\nabla(Z_a\eta^4)]\\
  &=u^{2a-2}\operatorname{div}(u^{2-2a}\nabla Z_a)\eta^4-32Z_a\eta^2|\nabla \eta|^2+8(1-a)u^{-1}Z_a\eta^3\langle\nabla u,\nabla\eta\rangle+Z_a\Delta\eta^4.
 \end{align*}
    By \eqref{est:2} and Young's inequality, we obtain
 \[\delta\Big(Z_a^2+\frac{|\nabla u|^4}{u^4}\Big)\eta^4\leqslant CR^{-2}Z_a\eta^2+CR^{-1}u^{-1}|\nabla u|Z_a\eta^3\leqslant\frac\delta 2\Big(Z_a^2+\frac{|\nabla u|^4}{u^4}\Big)+CR^{-4}.\]
 Thus, \(Z_a\eta^4\leqslant CR^{-2}\) holds at \(x_0\). By the maximization of \(x_0\), we obtain \(Z_a\leqslant CR^{-2}\) in \(B_R\). The proof is finished by letting \(R\to\infty\).
\end{proof}

The target estimate relies on the continuity method, which calls for \(\Delta u\leqslant0\) priorly.
 
\begin{proposition}\label{prop:est}
    Let \((M^n,g)\) be a complete, connected, and non-compact Riemannian manifold of dimension \(n\) with non-negative Ricci curvature, and \(u\) be a positive function satisfying \(\Delta^2u\geqslant0\). If \(\Delta u\leqslant0\), then \(\Delta u+\frac{2}{n-4}\frac{|\nabla u|^2}{u}\leqslant0\).
\end{proposition}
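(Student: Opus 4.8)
The plan is to run the continuity method on the one-parameter family $Z_a$, $a\in[0,\tfrac{2}{n-4}]$, feeding each step into Lemma \ref{lem:est}. First I would record the structural facts: $Z_a=Z_0+au^{-2}|\nabla u|^2$ with $Z_0=u^{-1}\Delta u\leqslant 0$, so $Z_a$ is affine and nondecreasing in $a$; hence $S:=\{a\in[0,\tfrac{2}{n-4}]:Z_a\leqslant 0\text{ on }M\}$ is an initial subinterval, it contains $0$, and it is closed (if $Z_a\leqslant 0$ for every $a<a^*$, then $Z_{a^*}=\lim_{a\uparrow a^*}Z_a\leqslant 0$ pointwise). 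Thus $S=[0,a^*]$, and since the assertion is precisely $Z_{2/(n-4)}\leqslant 0$ (multiply by $u>0$), it remains to prove $a^*=\tfrac{2}{n-4}$.

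To this end I would argue by contradiction: assume $a^*<\tfrac{2}{n-4}$ and show that $a^*+\varepsilon\in S$ for all sufficiently small $\varepsilon>0$. Fix such an $a=a^*+\varepsilon$. By Lemma \ref{lem:est}, to get $Z_a\leqslant 0$ on $M$ it suffices to verify \eqref{est:2}, with some $\delta=\delta(n,a)>0$, at any maximum point $x_0$ of $Z_a\eta^4$ with $Z_a(x_0)>0$ (if $Z_a(x_0)\leqslant 0$ there is nothing to do). Write $p=\tfrac{|\nabla u|^2}{u^2}$. The crucial gain from $a^*\in S$ is that $0<Z_a=Z_{a^*}+\varepsilon p\leqslant\varepsilon p$ at $x_0$, hence $p>0$ and $Z_a^2+\tfrac{|\nabla u|^4}{u^4}\leqslant 2p^2$ there. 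Now I would bound below the right-hand side of \eqref{est:1} at $x_0$: the term $(\tfrac 2n a-1)\Delta u\,Z_a$ is $\geqslant 0$ --- since $\tfrac 2n a-1<0$ (as $a<\tfrac{2}{n-4}<\tfrac n2$ for $n\geqslant 5$), $\Delta u\leqslant 0$, $Z_a>0$ --- so it may be discarded; writing $t:=Z_a/p\in(0,\varepsilon]$, the middle term $-\tfrac an(6a-n+4)p^2t$ is $\geqslant 0$ if $6a-n+4\leqslant 0$ and $\geqslant -C\varepsilon p^2$ otherwise (with $C=C(n)$); and the last term $\tfrac an(1+2a)[2-(n-4)a]p^2$ has a strictly positive coefficient (here $a<\tfrac{2}{n-4}$ is used). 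A negative contribution of the middle term can occur only when $6a-n+4>0$, i.e. $a>\tfrac{n-4}{6}$, forcing $a^*$ to be bounded below away from $0$, so for $\varepsilon$ small the positive coefficient of the last term dominates the $O(\varepsilon)$ loss; a short case check then gives, at $x_0$,
\[u^{2a-2}\operatorname{div}(u^{2-2a}\nabla Z_a)\geqslant 2\delta\,p^2\geqslant\delta\Big(Z_a^2+\tfrac{|\nabla u|^4}{u^4}\Big)\]
for some $\delta=\delta(n,a)>0$, which is \eqref{est:2}. Lemma \ref{lem:est} then yields $Z_a\leqslant 0$ on $M$, i.e. $a\in S$, contradicting the maximality of $a^*$; hence $a^*=\tfrac{2}{n-4}$.

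The crux --- and the reason for using the continuity method rather than a single estimate --- is the middle term $-\tfrac an(6a-n+4)\tfrac{|\nabla u|^2}{u^2}Z_a$ of \eqref{est:1}, whose sign is indefinite: already for $n=5$ it is negative, and for $a$ near $\tfrac{2}{n-4}$ it is too large to be absorbed by the last term --- whose coefficient $\tfrac an(1+2a)[2-(n-4)a]$ degenerates to $0$ at $a=\tfrac{2}{n-4}$ --- if one knows only the crude bound $Z_a\leqslant a\tfrac{|\nabla u|^2}{u^2}$; and the nonnegative term $(\tfrac 2n a-1)\Delta u\,Z_a$ is of no quantitative help, since it carries an uncontrolled factor of $u$. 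Stepping off from $Z_{a^*}\leqslant 0$ is precisely what removes the difficulty: it improves the bound at the new level to $Z_{a^*+\varepsilon}\leqslant\varepsilon\tfrac{|\nabla u|^2}{u^2}$, making the dangerous term $O(\varepsilon)$ and restoring the coercivity \eqref{est:2} with room to spare. The remaining points are routine --- $Z_a\eta^4$ attains its maximum on the compact set $\overline{B_{2R}}$, a positive maximum lies in the open ball $B_{2R}$ so that Lemma \ref{lem:est} applies, and the $\delta(n,a)$ above is positive in each sub-case --- all immediate from $n\geqslant 5$ and $a<\tfrac{2}{n-4}$.
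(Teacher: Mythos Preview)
Your proposal is correct and follows essentially the same route as the paper: both run the continuity method on the set $\{a\in[0,\tfrac{2}{n-4}]:Z_a\leqslant 0\}$, use $\Delta u\leqslant 0$ for the starting point $a=0$, and feed the smallness $0\leqslant Z_a\leqslant\varepsilon u^{-2}|\nabla u|^2$ at a putative maximum into \eqref{est:1} to verify the hypothesis \eqref{est:2} of Lemma~\ref{lem:est}. Your contradiction framing (take $a^*=\sup S$ and step to $a^*+\varepsilon$) and your explicit case split on the sign of $6a-n+4$ are cosmetic variations on the paper's ``$T$ is open'' argument; the substance is identical.
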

 
\begin{proof}
 Define \(T=\Big\{a\in[0,\frac{2}{n-4}]\mid Z_a \leqslant 0\text{ in }M\Big\}\), then \(0\in T\) and \(T\) is closed.
    
    We only need to show that \(T\) is also open. Take \(a_0\in T\), and \(a\in(a_0,a_0+\varepsilon)\cap[0,\frac{2}{n-4}]\). Assume that \(Z_a\eta^4\) attains its non-negative maximum at \(x_0\in B_{2R}\), then
 \begin{equation}\label{est:3}
     0\leqslant Z_a=Z_{a_0}+(a-a_0)u^{-2}|\nabla u|^2\leqslant(a-a_0)u^{-2}|\nabla u|^2\leqslant\varepsilon u^{-2}|\nabla u|^2
 \end{equation}
    holds at \(x_0\). By \(\frac 2 na-1<0\), \(0<a<\frac{2}{n-4}\) and \eqref{est:3}, the inequality \eqref{est:1} yields
    \begin{equation}\label{est:4}
        u^{2a-2}\operatorname{div}(u^{2-2a}\nabla Z_a)\geqslant-C\frac{|\nabla u|^2}{u^2}Z_a+\frac a n(1+2a)[2-(n-4)a]\frac{|\nabla u|^4}{u^4}\geqslant2\delta\frac{|\nabla u|^4}{u^4},
    \end{equation}
    at \(x_0\), where \(\delta>0\) by taking \(\varepsilon>0\) small enough. By \eqref{est:3},
    \[Z_a^2=\frac{\Delta u}{u}Z_a+a\frac{|\nabla u|^2}{u^2}Z_a\leqslant a\frac{|\nabla u|^2}{u^2}Z_a\leqslant a\varepsilon\frac{|\nabla u|^4}{u^4}\]
    holds at \(x_0\). Thus, the condition \eqref{est:2} is verified by \eqref{est:4}, and \(a\in T\) by Lemma \ref{lem:est}.
\end{proof}
\section{Invariant tensors and the differential identity}\label{sec:identity}

In this section, we use the invariant tensor technique to establish differential identities.

Define the trace-free tensor
\[E_{ij}=u_{ij}+b\frac{u_i u_j}{u}-\frac{1}{n}\Big(\Delta u+b\frac{|\nabla u|^2}{u}\Big)g_{ij}\]
where \(b\) is a constant to be determined later. We trace the covariant divergence of \(E_{ij}\) to get
\[{E_{ij,}}^i=\frac{n-1}{n} (\Delta u)_{,j}+R_{ij} u^i +b\frac{\Delta u}{u} u_j+\frac{n-2}{n} b \frac{u_{ij} u^i}{u}-\frac{n-1}{n} b\frac{|\nabla u|^2}{u^2} u_j.\]
Plugging \(u_{ij}= E_{ij}-b\frac{u_i u_j}{u}+\frac{1}{n}\Big(\Delta u+b\frac{|\nabla u|^2}{u}\Big)g_{ij}\) into the above formula yields
\[{E_{ij,}}^i=\frac{n-2}{n}b E_j +\frac{n-1}{n}F_j+R_{ij} u^i,\]
where \(E_j:=\frac{E_{ij}u^i}{u}\), and \(F_j=(\Delta u)_{,j}+\frac{n+2}{n}b\frac{\Delta u}{u}u_j-b(1+\frac{n-2}{n}b)\frac{|\nabla u|^2}{u^2}u_j\). Thus
\begin{align*}
    {F_{i,}}^i=&~\Delta^2 u+\frac{n+2}{n} b \Big(\frac{{(\Delta u)_,}^i}{u} u_i+\frac{(\Delta u)^2}{u}-\frac{\Delta u |\nabla u|^2}{u^2}\Big)\\
    &-b(1+\frac{n-2}{n}b)\Big(2\frac{u^{ij}u_iu_j}{u^2}+\frac{\Delta u |\nabla u|^2}{u^2}-2\frac{|\nabla u|^4}{u^3}\Big).
\end{align*}
Likewise, we replace \(u^{ij}\) and \({(\Delta u)_,}^i\) by \(E^{ij}\) and \(F^i\) respectively. It follows that \[{F_{i,}}^i=-2b(1+\frac{n-2}{n}b)E+\frac{n+2}{n}bF+G,\]
where \(E:=\frac{E_iu^i}{u}\), \(F:=\frac{F_iu^i}{u}\), and 
\[G=\Delta^2 u +\frac{n+2}{n}b\frac{(\Delta u)^2}{u} -\frac{2(n+2)}{n}b(1+b)\frac{\Delta u|\nabla u|^2}{u^2}+b(3b+2)(1+\frac{n-2}{n}b)\frac{|\nabla u|^4}{u^3}.\]
To compute \({G_,}^i\), we differentiate the equation \eqref{eq}, implying that \({(\Delta^2 u)_,}^i=\alpha\frac{\Delta^2u}{u}u^i\). Thus
\begin{align*}
    {G_,}^i=~&\alpha\frac{\Delta^2u}{u}u^i+\frac{n+2}{n}b\Big(2\frac{\Delta u{(\Delta u)_,}^i}{u}-\frac{(\Delta u)^2}{u^2} u^i\Big)\\
    &-\frac{2(n+2)}{n}b(1+b)\Big( \frac{|\nabla u|^2{(\Delta u)_,}^i}{u^2}+2\frac{\Delta u}{u^2}u^{ij}u_j-2\frac{\Delta u|\nabla u|^2}{u^3}u^i\Big)\\
    &+b(3b+2)(1+\frac{n-2}{n}b)\Big(4\frac{|\nabla u|^2}{u^3}u^{ij}u_j-3\frac{|\nabla u|^4}{u^4} u^i\Big).
\end{align*}
Replacing the terms \(u^{ij}\), \({(\Delta u)_,}^i\) and \(\Delta^2 u\) by \(E^{ij}\), \(F^i\), and \(G\) respectively yields
\begin{align*}
    {G_,}^i=&~4b\Big((3b+2)(1+\frac{n-2}{n}b)\frac{|\nabla u|^2}{u^2}-\frac{n+2}{n}(1+b)\frac{\Delta u}{u}\Big)E^i+\alpha\frac{G}{u}u^i\\
    &+\frac{2(n+2)}{n}b\Big(\frac{\Delta u}{u}-(1+b)\frac{|\nabla u|^2}{u^2}\Big)F^i-\frac{n+2}{n}b\Big(\frac{n+4}{n}(1+2b)+\alpha\Big)\frac{(\Delta u)^2}{u^2}u^i\\
    &+\frac{2b}{n}\Big(2(n+4)(1+2b)(1+\frac{n-1}{n}b)+(n+2)(1+b)\alpha\Big)\frac{\Delta u|\nabla u|^2}{u^3}u^i\\
    &-b(1+\frac{n-2}{n}b)\Big((1+2b)(6+\frac{7n-4}{n}b)+(2+3b)\alpha\Big)\frac{|\nabla u|^4}{u^4}u^i.
\end{align*}

For eliminating the term \(\frac{(\Delta u)^2}{u^2}u^i\) in \({G_,}^i\), we take \(b=-\frac 1 2(1+\frac{n\alpha}{n+4})\) in above formulae. Thus, the following proposition states relationships between invariant tensors.

\begin{proposition}\label{prop:invariant}
    Invariant tensors are taken as
    \[E_{ij}=u_{ij}-\frac 1 2(1+\frac{n\alpha}{n+4})\frac{u_i u_j}{u}-\frac{1}{n}\Big(\Delta u-\frac 1 2(1+\frac{n\alpha}{n+4})\frac{|\nabla u|^2}{u}\Big)g_{ij},\]
    \[F_j=(\Delta u)_{,j}-\frac{n+2}{2n}(1+\frac{n\alpha}{n+4})\frac{\Delta u}{u}u_j+\frac 1 4(1+\frac{n\alpha}{n+4})(\frac{n+2}{n}-\frac{n-2}{n+4}\alpha)\frac{|\nabla u|^2}{u^2}u_j,\]
    \begin{align*}
        G=~&\Delta^2 u-\frac{n+2}{2n}(1+\frac{n\alpha}{n+4})\frac{(\Delta u)^2}{u}+\frac{n+2}{2n}(1+\frac{n\alpha}{n+4})(1-\frac{n\alpha}{n+4})\frac{\Delta u|\nabla u|^2}{u^2}\\
        &-\frac 1 8(1+\frac{n\alpha}{n+4})(1-\frac{3n\alpha}{n+4})(\frac{n+2}{n}-\frac{n-2}{n+4}\alpha)\frac{|\nabla u|^4}{u^3}.
    \end{align*}
    Relationships between them via differentiation are as follows:
    \[{E_{i,}}^i=u^{-1}(E_{ij}E^{ij}+R_{ij}u^iu^j)-\Big(\frac{n-1}{n}-\frac{\alpha}{n+4}\Big)E+\frac{n-1}{n} F,\]
    \begin{align*}
        {F_{i,}}^i=\frac 1 2(1+\frac{n\alpha}{n+4})(\frac{n+2}{n}-\frac{n-2}{n+4}\alpha)E-\frac{n+2}{2n}(1+\frac{n\alpha}{n+4})F+G,
    \end{align*}
    \begin{align*}
        {G_{,}}^i=~&(1+\frac{n\alpha}{n+4})\Big[-\frac 1 2(1-\frac{3n\alpha}{n+4})(\frac{n+2}{n}-\frac{n-2}{n+4}\alpha)\frac{|\nabla u|^2}{u^2}+\frac{n+2}{n}(1-\frac{n\alpha}{n+4})\frac{\Delta u}{u}\Big]E^i\\
        &+\Big[\frac{n+2}{2n}(1+\frac{n\alpha}{n+4})\Big((1-\frac{n\alpha}{n+4})\frac{|\nabla u|^2}{u^2}-2\frac{\Delta u}{u}\Big)\Big]F^i+\alpha\frac{G}{u}u^i\\
        &-\frac{\alpha}{2(n+4)}(1+\frac{n\alpha}{n+4})(1-\frac{n-4}{n+4}\alpha)\Big(n+2-\frac{n(n-2)}{n+4}\alpha\Big)\frac{|\nabla u|^4}{u^4} u^i \\
        &+\frac \alpha 2(1+\frac{n\alpha}{n+4})(1-\frac{n-4}{n+4}\alpha)\frac{\Delta u|\nabla u|^2}{u^3}u^i.
    \end{align*}
\end{proposition}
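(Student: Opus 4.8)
The plan is to obtain the three identities by a direct tensor computation carried out with a free parameter $b$ in the definition of $E_{ij}$, and to specialize $b$ only at the very end. First I would compute the traced divergence ${E_{ij,}}^i$: applying $\nabla^i$ to $E_{ij}=u_{ij}+b\frac{u_iu_j}{u}-\frac1n(\Delta u+b\frac{|\nabla u|^2}{u})g_{ij}$ and using the commutation identity $\nabla^i u_{ij}=(\Delta u)_{,j}+R_{ij}u^i$ produces the Ricci term; re-expressing $u_{ij}$ through $E_{ij}$ then packages the remaining first-derivative terms into a covector, which is exactly what defines $F_j$. This yields ${E_{ij,}}^i=\frac{n-2}{n}bE_j+\frac{n-1}{n}F_j+R_{ij}u^i$, as displayed in the excerpt.

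Next I would pass from this tensor divergence to the scalar divergence ${E_{i,}}^i=\nabla^i\big(\tfrac{E_{ij}u^j}{u}\big)$. Expanding, $\nabla^i E_i=\frac{({E_{ij,}}^i)u^j}{u}+\frac{E_{ij}u^{ij}}{u}-\frac{E_{ij}u^iu^j}{u^2}$; substituting $u_{ij}$ in terms of $E_{ij}$ once more in the middle term produces $\frac{|E|^2}{u}$ (the $g_{ij}$ trace terms drop because $E_{ij}$ is trace-free) together with a multiple of $E$, while the first term contributes $\frac{R_{ij}u^iu^j}{u}$ plus multiples of $E$ and $F$. Collecting the coefficient of $E$ gives $-(\frac2n b+1)$, which becomes $-(\frac{n-1}{n}-\frac{\alpha}{n+4})$ after the specialization. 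The identities for ${F_{i,}}^i$ and ${G_,}^i$ follow the same scheme: differentiate $F_j$, trace, and replace $u^{ij}$, $(\Delta u)_{,j}$ by $E^{ij}$, $F^i$ to express ${F_{i,}}^i$ through $E$, $F$ and the newly defined scalar $G$; then differentiate the equation, using ${(\Delta^2u)_,}^i=\alpha u^{\alpha-1}u^i=\alpha\frac{\Delta^2u}{u}u^i$, and once again substitute $E^{ij}$, $F^i$, $G$ for $u^{ij}$, $(\Delta u)_{,j}$, $\Delta^2u$ to obtain ${G_,}^i$. This is precisely the chain of displays already recorded in the excerpt.

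Finally I would make the choice $b=-\frac12\big(1+\frac{n\alpha}{n+4}\big)$: this is the unique nonzero value for which the coefficient $-\frac{n+2}{n}b\big(\frac{n+4}{n}(1+2b)+\alpha\big)$ of $\frac{(\Delta u)^2}{u^2}u^i$ in ${G_,}^i$ vanishes, since that coefficient is zero exactly when $1+2b=-\frac{n\alpha}{n+4}$. Substituting this value into the general formulas and simplifying each rational coefficient in $n$ and $\alpha$ — for instance $1+\frac{n-2}{n}b=\frac12(\frac{n+2}{n}-\frac{n-2}{n+4}\alpha)$, which feeds the $E$-coefficients of $F_j$ and of ${F_{i,}}^i$, and likewise for the quartic $\frac{|\nabla u|^4}{u^4}u^i$ and cubic $\frac{\Delta u|\nabla u|^2}{u^3}u^i$ coefficients of ${G_,}^i$ — reproduces the three stated identities.

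The main obstacle I anticipate is not conceptual but bookkeeping: the $\frac{|\nabla u|^4}{u^4}u^i$ and $\frac{\Delta u|\nabla u|^2}{u^3}u^i$ coefficients are each carried through several substitutions of $u_{ij}$, ${(\Delta u)_,}^i$ and $\Delta^2u$, so sign and factor slips are easy. A useful consistency check is homogeneity: $E_{ij}$, $F_j$, $G$ and both sides of every identity scale by $\lambda$ under $u\mapsto\lambda u$, which constrains (though does not fully determine) the admissible coefficients; a second check is to evaluate the identities on an explicit radial function in the flat model $\mathbb R^n$.
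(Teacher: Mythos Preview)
Your proposal is correct and follows essentially the same route as the paper: carry out the computations of ${E_{ij,}}^i$, ${F_{i,}}^i$, and ${G_,}^i$ with the free parameter $b$, re-express $u_{ij}$, $(\Delta u)_{,j}$, $\Delta^2u$ in terms of $E_{ij}$, $F_j$, $G$ at each stage, and then fix $b=-\frac12(1+\frac{n\alpha}{n+4})$ to kill the $\frac{(\Delta u)^2}{u^2}u^i$ term. Your explicit treatment of the passage from ${E_{ij,}}^i$ to the scalar divergence ${E_{i,}}^i$ (with the coefficient $-(\tfrac{2}{n}b+1)$ of $E$) is a step the paper leaves implicit, and your homogeneity and radial-solution checks are sensible safeguards against the bookkeeping errors you rightly flag.
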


The key differential identity is established by Proposition \ref{prop:invariant}.

\begin{proposition}\label{prop:id}
    For \(c_2=\frac{(n^2+2n+4)}{(n-1)(n+4)}\alpha+\frac{n+2}{n-1}\) and
    \[c_1=-\frac{n^2(3n-10)}{4(n-1)(n+4)^2}\alpha^2+\frac{2(n+2)}{(n-1)(n+4)}\alpha+\frac{3(n+2)}{4(n-1)},\]
    the following identity holds:
    \begin{align}
        \begin{split}\label{identity}
            &u^{\frac{2\alpha}{n+4}}\Big\{u^{-\frac{2\alpha}{n+4}}\Big[\Big(c_1\frac{|\nabla u|^2}{u}-c_2 \Delta u\Big)E_i+\Big(\frac{n+2}{n+4}\alpha\frac{|\nabla u|^2}{u}+\Delta u\Big)F_i\\
            &~~~~~~~~~~~~~~~~~~~~~{-Gu_i+\frac{n\alpha}{2(n+4)}(1+\frac{n\alpha}{n+4})(1-\frac{n-4}{n+4}\alpha)\frac{|\nabla u|^4}{u^3}u_i\Big]\Big\}_{,}}^i\\
            =~&\Big(c_1\frac{|\nabla u|^2}{u^2}-c_2\frac{\Delta u}{u}\Big)(E_{ij}E^{ij}+R_{ij}u^iu^j)+2c_1E_iE^i+F_iF^i\\
            &+2A_{12}E_iF^i+2A_{13}\frac{|\nabla u|^2}{u}E+2A_{23}\frac{|\nabla u|^2}{u}F+A_{33}\frac{|\nabla u|^6}{u^4},
        \end{split}
    \end{align}
    where the coefficients are
    \[A_{12}=\frac{n^2-8}{2(n-1)(n+4)}\alpha-\frac{n+2}{2(n-1)},\quad A_{13}=\frac{\alpha}{n+4}(\frac{n^2\alpha}{n+4}+n+1)(1-\frac{n-4}{n+4}\alpha),\]
    \[A_{23}=-\frac \alpha 4(1-\frac{n-4}{n+4}\alpha),\quad A_{33}=\frac{n(n-2)}{2(n+4)^2}\alpha^2(1+\frac{n\alpha}{n+4})(1-\frac{n-4}{n+4}\alpha).\]
\end{proposition}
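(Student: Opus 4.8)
The plan is to apply the divergence directly to the explicit $1$-form appearing inside the braces on the left of \eqref{identity}. Write
\[
W^i:=\Big(c_1\frac{|\nabla u|^2}{u}-c_2 \Delta u\Big)E^i+\Big(\frac{n+2}{n+4}\alpha\frac{|\nabla u|^2}{u}+\Delta u\Big)F^i-Gu^i+\frac{n\alpha}{2(n+4)}\Big(1+\frac{n\alpha}{n+4}\Big)\Big(1-\frac{n-4}{n+4}\alpha\Big)\frac{|\nabla u|^4}{u^3}u^i,
\]
so that the left-hand side of \eqref{identity} is $W^i_{,i}-\frac{2\alpha}{n+4}u^{-1}u_iW^i$. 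First I would expand $W^i_{,i}$ by the Leibniz rule. Each resulting term is either a scalar coefficient times one of ${E_{i,}}^i$, ${F_{i,}}^i$, $u^i_{,i}=\Delta u$, or $G_{,i}u^i$ — which I replace by the three relations of Proposition \ref{prop:invariant}, the last of them contracted with $u_i$ — or the gradient of a scalar coefficient contracted against $E^i$, $F^i$ or $u^i$.

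For the second family the only first derivatives needed are those of $\frac{|\nabla u|^2}{u}$, $\Delta u$, $\frac{|\nabla u|^4}{u^3}$ and $G$, and I would keep everything expressed through the invariant tensors by means of the two algebraic identities obtained on tracing the definitions of $E_{ij}$ and $F_j$ against $\nabla u$,
\[
u_{ij}u^j=uE_i+\frac12\Big(1+\frac{n\alpha}{n+4}\Big)\frac{|\nabla u|^2}{u}u_i+\frac1n\Big(\Delta u-\frac12\Big(1+\frac{n\alpha}{n+4}\Big)\frac{|\nabla u|^2}{u}\Big)u_i,
\]
\[
(\Delta u)_{,i}=F_i+\frac{n+2}{2n}\Big(1+\frac{n\alpha}{n+4}\Big)\frac{\Delta u}{u}u_i-\frac14\Big(1+\frac{n\alpha}{n+4}\Big)\Big(\frac{n+2}{n}-\frac{n-2}{n+4}\alpha\Big)\frac{|\nabla u|^2}{u^2}u_i,
\]
together with the contractions $u_iE^i=uE$, $u_iF^i=uF$ and the analogous one for $E_{ij}u^iu^j$. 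After these substitutions every term of $W^i_{,i}-\frac{2\alpha}{n+4}u^{-1}u_iW^i$ becomes a scalar-function multiple of one of the finitely many quantities $u^{-1}(E_{ij}E^{ij}+R_{ij}u^iu^j)$, $E_iE^i$, $F_iF^i$, $E_iF^i$, $E$, $F$, $G$, $\frac{|\nabla u|^6}{u^4}$, and the ``off-structure'' monomials $\Delta u\cdot E$, $\Delta u\cdot F$, $\Delta u\cdot G$, $\frac{|\nabla u|^2}{u}G$ and $\frac{\Delta u|\nabla u|^4}{u^3}$.

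The bulk of the work, and the only genuine obstacle, is then the bookkeeping needed to verify that, with the stated $c_1$ and $c_2$, the five off-structure monomials cancel identically, the surviving coefficients of $E$ and $F$ collapse to $2A_{13}\frac{|\nabla u|^2}{u}$ and $2A_{23}\frac{|\nabla u|^2}{u}$, and those of $E_iE^i$, $E_iF^i$, $\frac{|\nabla u|^6}{u^4}$ to $2c_1$, $2A_{12}$, $A_{33}$ — the coefficient of $E_{ij}E^{ij}+R_{ij}u^iu^j$ already emerging as $c_1\frac{|\nabla u|^2}{u^2}-c_2\frac{\Delta u}{u}$ straight from the term $(c_1\frac{|\nabla u|^2}{u}-c_2\Delta u){E_{i,}}^i$. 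In fact the ansatz for $W^i$ is dictated by these cancellations: the coefficient of $\Delta u$ in the $F^i$-term must be $1$ so that $\Delta u\cdot G$ kills the $-G\Delta u$ produced by $(Gu^i)_{,i}$; the factor $\frac{n+2}{n+4}\alpha$ of $\frac{|\nabla u|^2}{u}$ in that term is pinned down by $\frac{n+2}{n+4}+\frac{2}{n+4}-1=0$, which removes $\frac{|\nabla u|^2}{u}G$; the last coefficient in $W^i$ is fixed by the cancellation of $\frac{\Delta u|\nabla u|^4}{u^3}$ coming from the $\frac{\Delta u|\nabla u|^2}{u^3}u^i$-term of $G_{,i}$ and from $(Gu^i)_{,i}$; and requiring the coefficients of $\Delta u\cdot E$ and $\Delta u\cdot F$ to vanish gives two linear conditions on $c_1,c_2$ whose unique solution is precisely the pair in the statement. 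Collecting what remains produces \eqref{identity}. No estimates and no geometric input beyond Proposition \ref{prop:invariant} are used; the computation is long but entirely routine.
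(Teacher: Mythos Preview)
Your approach is correct and is essentially the same as the paper's: the paper computes the weighted divergences of the six building blocks $u^{-\frac{2\alpha}{n+4}}\frac{|\nabla u|^2}{u}E_i$, $u^{-\frac{2\alpha}{n+4}}\Delta u\,E_i$, $u^{-\frac{2\alpha}{n+4}}\frac{|\nabla u|^2}{u}F_i$, $u^{-\frac{2\alpha}{n+4}}\Delta u\,F_i$, $u^{-\frac{2\alpha}{n+4}}Gu_i$, $u^{-\frac{2\alpha}{n+4}}\frac{|\nabla u|^4}{u^3}u_i$ separately (using exactly the substitutions you describe) and then takes the linear combination, whereas you take the linear combination first and then differentiate; by linearity these are the same computation. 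Your added explanation of how the cancellation of the off-structure terms $\Delta u\,G$, $\frac{|\nabla u|^2}{u}G$, $\frac{\Delta u|\nabla u|^4}{u^3}$, $\Delta u\,E$, $\Delta u\,F$ determines the coefficients is a nice touch that the paper omits.
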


\begin{proof}    
    By Proposition \ref{prop:invariant}, we derive the following differential identities:
    \begin{align*}
        u^{\frac{2\alpha}{n+4}}{(u^{-\frac{2\alpha}{n+4}-1}|\nabla u|^2E_i)_{,}}^{i}=~&\frac{|\nabla u|^2}{u^2}(E_{ij}E^{ij}+R_{ij}u^iu^j)+2E_iE^i\\
        &+(\frac{n-2}{n+4}\alpha-1)\frac{|\nabla u|^2}{u}E+\frac 2 n \Delta u E+\frac{n-1}{n}\frac{|\nabla u|^2}{u}F,
    \end{align*}
    \begin{align*}
     u^{\frac{2\alpha}{n+4}}{(u^{-\frac{2\alpha}{n+4}} \Delta u E_i)_{,}}^i=~&\frac{\Delta u}{u}(E_{ij}E^{ij}+R_{ij}u^iu^j)+E_i F^i+\Big(\frac{n\alpha}{2(n+4)}-\frac{n-4}{2n}\Big)\Delta u E\\
        &-\frac 1 4(1+\frac{n\alpha}{n+4})(\frac{n+2}{n}-\frac{n-2}{n+4}\alpha)\frac{|\nabla u|^2}{u}E+\frac{n-1}{n}\Delta u F,
    \end{align*}
    \begin{align*}
     u^{\frac{2\alpha}{n+4}}{(u^{-\frac{2\alpha}{n+4}-1}|\nabla u|^2 F_i)_{,}}^i=~&2E_iF^i+\frac 1 2(1+\frac{n\alpha}{n+4})(\frac{n+2}{n}-\frac{n-2}{n+4}\alpha)\frac{|\nabla u|^2}{u}E\\
        &+\Big(\frac{n-8}{2(n+4)}\alpha-\frac{n+4}{2n}\Big)\frac{|\nabla u|^2}{u}F+\frac 2 n \Delta u F+\frac{|\nabla u|^2}{u}G,
    \end{align*}
    \begin{align*}
        u^{\frac{2\alpha}{n+4}}{(u^{-\frac{2\alpha}{n+4}} \Delta u F_i)_{,}}^i=~&F_iF^i-\frac{2\alpha}{n+4}\Delta u F+\Delta u G\\
        &+\frac 1 4(1+\frac{n\alpha}{n+4})(\frac{n+2}{n}-\frac{n-2}{n+4}\alpha)\Big(2\Delta u E-\frac{|\nabla u|^2}{u}F\Big),
    \end{align*}
    \begin{align*}
        &u^{\frac{2\alpha}{n+4}}{(u^{-\frac{2\alpha}{n+4}} Gu_i)_{,}}^i\\
        =~&(1+\frac{n\alpha}{n+4})\Big(-\frac 1 2(1-\frac{3n\alpha}{n+4})(\frac{n+2}{n}-\frac{n-2}{n+4}\alpha)\frac{|\nabla u|^2}{u}+\frac{n+2}{n}(1-\frac{n\alpha}{n+4})\Delta u\Big)E\\
        &+\frac{n+2}{2n}(1+\frac{n\alpha}{n+4})\Big((1-\frac{n\alpha}{n+4})\frac{|\nabla u|^2}{u}-2\Delta u\Big)F+\frac{n+2}{n+4}\alpha\frac{|\nabla u|^2}{u}G+\Delta uG\\
        &+\frac \alpha 2(1+\frac{n\alpha}{n+4})(1-\frac{n-4}{n+4}\alpha)\Big[\Big(\frac{n(n-2)}{(n+4)^2}\alpha-\frac{n+2}{n+4}\Big)\frac{|\nabla u|^2}{u}+\Delta u\Big]\frac{|\nabla u|^4}{u^3}.
    \end{align*}
    \[u^{\frac{2\alpha}{n+4}}{(u^{-\frac{2\alpha}{n+4}-3}|\nabla u|^4u_i)_,}^i=4\frac{|\nabla u|^2}{u}E+\Big(\frac{2(n-2)}{n+4}\alpha-\frac{n+2}{n}\Big)\frac{|\nabla u|^6}{u^4}+\frac{n+4}{n}\frac{\Delta u|\nabla u|^4}{u^3}.\]
    We finish the proof by linearly combining six identities.
\end{proof}

We prepare the following lemma for proving Theorem \ref{thm:subcritical}.

\begin{lemma}
    Under assumptions of Proposition \ref{prop:id}, when \(0<\alpha<\frac{n+4}{n-4}\) and \(n\geqslant5\), there exists constants \(\delta_1,\delta_2>0\) depending on \(n\) and \(\alpha\) such that
    \begin{align}
        \begin{split}\label{inequality}
            &\Big\{u^{-\frac{2\alpha}{n+4}}\Big[\Big(c_1\frac{|\nabla u|^2}{u}-c_2 \Delta u\Big)E_i+\frac{n+2}{n+4}\alpha\frac{|\nabla u|^2}{u}F_i+(1-\frac{n+4}{n+2}\frac{\delta_2}{\alpha})(\Delta uF_i-Gu_i)\\
            &~~~~~~~~~~~~~~~{+\frac{n\alpha}{2(n+4)}(1+\frac{n\alpha}{n+4})(1-\frac{n-4}{n+4}\alpha)\frac{|\nabla u|^4}{u^3}u_i+\delta_1\frac{\Delta u|\nabla u|^2}{u^2}u_i\Big]\Big\}_{,}}^i\\
            \geqslant~&\delta_2 u^{-\frac{2\alpha}{n+4}}\Big(E_iE^i+F_iF^i+\frac{|\nabla u|^6}{u^4}+\frac{(\Delta u)^2|\nabla u|^2}{u^2}+\frac{|\nabla u|^2}{u}\Delta^2u\Big).
        \end{split}
    \end{align}
\end{lemma}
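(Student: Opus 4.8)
The plan is to add to the identity \eqref{identity} two auxiliary divergence identities, each with a small coefficient ($\delta_1$, resp.\ $\delta_2$), chosen so that the right-hand side turns into a positive definite quadratic form in $(E_i,F_i,W_i)$, where $W_i:=\tfrac{|\nabla u|^2}{u^2}u_i$, plus the two nonnegative terms $\tfrac{(\Delta u)^2|\nabla u|^2}{u^2}$ and $\tfrac{|\nabla u|^2}{u}\Delta^2u$ demanded on the right of \eqref{inequality}.

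\textbf{Two auxiliary identities.} Arguing exactly as in the proof of Proposition \ref{prop:id} (differentiate each factor, then express $u_{ij},(\Delta u)_{,i},\Delta^2u$ through $E_{ij},F_i,G$ by Proposition \ref{prop:invariant}, using $u^{ij}u_iu_j=u^2E-b\tfrac{|\nabla u|^4}{u}+\tfrac1n(\Delta u+b\tfrac{|\nabla u|^2}{u})|\nabla u|^2$ with $b=-\tfrac12(1+\tfrac{n\alpha}{n+4})$), one gets
\begin{align*}
    u^{\frac{2\alpha}{n+4}}{\Big(u^{-\frac{2\alpha}{n+4}}\tfrac{\Delta u|\nabla u|^2}{u^2}u_i\Big)_{,}}^{i}=\tfrac{n+2}{n}\tfrac{(\Delta u)^2|\nabla u|^2}{u^2}+2\Delta u\,E+\tfrac{|\nabla u|^2}{u}F+b_1\tfrac{\Delta u|\nabla u|^4}{u^3}+b_2\tfrac{|\nabla u|^6}{u^4}
\end{align*}
for explicit constants $b_1,b_2$, and, subtracting the fourth and fifth identities displayed in the proof of Proposition \ref{prop:id} (the $\Delta u\,G$ terms cancel),
\begin{align*}
    u^{\frac{2\alpha}{n+4}}{\Big(u^{-\frac{2\alpha}{n+4}}(\Delta u\,F_i-Gu_i)\Big)_{,}}^{i}=F_iF^i-\tfrac{n+2}{n+4}\alpha\,\tfrac{|\nabla u|^2}{u}\,G+(\text{cross terms}),
\end{align*}
the cross terms being multiples of $\Delta u\,E,\ \tfrac{|\nabla u|^2}{u}E,\ \Delta u\,F,\ \tfrac{|\nabla u|^2}{u}F,\ \tfrac{\Delta u|\nabla u|^4}{u^3},\ \tfrac{|\nabla u|^6}{u^4}$; expanding $G$, the piece $-\tfrac{n+2}{n+4}\alpha\tfrac{|\nabla u|^2}{u}G$ produces exactly $-\tfrac{n+2}{n+4}\alpha\tfrac{|\nabla u|^2}{u}\Delta^2u$ together with $\tfrac{(n+2)^2}{2n(n+4)}\alpha(1+\tfrac{n\alpha}{n+4})\tfrac{(\Delta u)^2|\nabla u|^2}{u^2}$ and terms cubic in $|\nabla u|$.

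\textbf{Assembling.} The divergence argument in \eqref{inequality} differs from that in \eqref{identity} precisely by $-\tfrac{n+4}{n+2}\tfrac{\delta_2}{\alpha}(\Delta u\,F_i-Gu_i)+\delta_1\tfrac{\Delta u|\nabla u|^2}{u^2}u_i$, so the left-hand side of \eqref{inequality} is $u^{-\frac{2\alpha}{n+4}}$ times the sum of the right-hand side of \eqref{identity}, $\delta_1$ times the first identity above, and $-\tfrac{n+4}{n+2}\tfrac{\delta_2}{\alpha}$ times the second. In this sum the coefficient of $\tfrac{|\nabla u|^2}{u}\Delta^2u$ is exactly $\delta_2$ and that of $\tfrac{(\Delta u)^2|\nabla u|^2}{u^2}$ is $\delta_1\tfrac{n+2}{n}-\delta_2\tfrac{n+2}{2n}(1+\tfrac{n\alpha}{n+4})$; the rest splits into $(c_1\tfrac{|\nabla u|^2}{u^2}-c_2\tfrac{\Delta u}{u})(E_{ij}E^{ij}+R_{ij}u^iu^j)$, a quadratic form in $(E_i,F_i,W_i)$ whose symmetric matrix is an $O(\delta_1)+O(\delta_2)$ perturbation of the matrix $M_0$ with diagonal $(2c_1,1,A_{33})$ and $(1,2),(1,3),(2,3)$-entries $A_{12},A_{13},A_{23}$, and cross terms $\Delta u\,E,\Delta u\,F,\tfrac{\Delta u|\nabla u|^4}{u^3}$ with $O(\delta_1)+O(\delta_2)$ coefficients. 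By Proposition \ref{prop:est}, $-u\Delta u\ge\tfrac{2}{n-4}|\nabla u|^2\ge0$, so $c_1\tfrac{|\nabla u|^2}{u^2}-c_2\tfrac{\Delta u}{u}\ge(c_1+\tfrac{2c_2}{n-4})\tfrac{|\nabla u|^2}{u^2}$; combined with $R_{ij}u^iu^j\ge0$ and $E_{ij}E^{ij}\ge\tfrac{u^2}{|\nabla u|^2}E_iE^i$ (Cauchy--Schwarz for $E_{ij}u^j=uE_i$), this term is $\ge(c_1+\tfrac{2c_2}{n-4})E_iE^i$, which replaces the $(1,1)$-entry $2c_1$ of $M_0$ by $3c_1+\tfrac{2c_2}{n-4}$; call the new matrix $M$. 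Lastly, Young's inequality $|\Delta u\,E|\le\lambda E_iE^i+\tfrac1{4\lambda}\tfrac{(\Delta u)^2|\nabla u|^2}{u^2}$, and the analogues for $\Delta u\,F$ and $\tfrac{\Delta u|\nabla u|^4}{u^3}$ with a fixed large $\lambda=\lambda(n,\alpha)$, absorb all cross terms, using only $O(\delta_1)+O(\delta_2)$ of $E_iE^i,F_iF^i,\tfrac{|\nabla u|^6}{u^4}$ and at most $\tfrac14\delta_1\tfrac{n+2}{n}$ of the $\tfrac{(\Delta u)^2|\nabla u|^2}{u^2}$-coefficient.

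\textbf{The crux and the conclusion.} It remains to verify algebraically that $M$ is positive definite for $n\ge5$ and $0<\alpha<\tfrac{n+4}{n-4}$; by Sylvester's criterion this is $3c_1+\tfrac{2c_2}{n-4}>0$, $3c_1+\tfrac{2c_2}{n-4}>A_{12}^2$ and $\det M>0$ (the first of these, with $c_2>0$, also gives $c_1+\tfrac{2c_2}{n-4}>0$ used above). This is exactly where subcriticality enters: $A_{13},A_{23},A_{33}$ all contain the factor $1-\tfrac{n-4}{n+4}\alpha$, positive precisely when $\alpha<\tfrac{n+4}{n-4}$, so as $\alpha$ approaches the critical exponent $\det M\to0$ and the form degenerates only to its (still positive) $2\times2$ principal minor. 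Granting positive definiteness of $M$ with some margin, one chooses $\delta_1>0$ small enough that, after the $O(\delta_1)$ perturbation and the Young losses, the form is still bounded below by a positive multiple of $E_iE^i+F_iF^i+\tfrac{|\nabla u|^6}{u^4}$ and the $\tfrac{(\Delta u)^2|\nabla u|^2}{u^2}$-coefficient is still at least $\tfrac12\delta_1\tfrac{n+2}{n}$; then one chooses $\delta_2\ll\delta_1$ small enough that the $O(\delta_2)$ contributions of the second identity are dominated by these reserves and that $\delta_2$ is smaller than all the resulting positive coefficients. This yields \eqref{inequality}. The only genuine obstacle is the positive-definiteness computation for $M$, where the range $0<\alpha<\tfrac{n+4}{n-4}$ (and $n\ge5$) is essential.
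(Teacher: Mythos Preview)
Your overall strategy is exactly the paper's: start from \eqref{identity}, use $\mathrm{Ric}\geqslant 0$ together with Proposition~\ref{prop:est} to bound the term $(c_1\tfrac{|\nabla u|^2}{u^2}-c_2\tfrac{\Delta u}{u})(E_{ij}E^{ij}+R_{ij}u^iu^j)$ from below by a multiple of $E_iE^i$, reduce the right-hand side to a $3\times3$ quadratic form in $(E_i,F_i,W_i)$, and then append the two auxiliary divergences with small coefficients $\delta_1,\delta_2$ to manufacture the missing $\tfrac{(\Delta u)^2|\nabla u|^2}{u^2}$ and $\tfrac{|\nabla u|^2}{u}\Delta^2u$ terms. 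Two points, however, keep this from being a complete proof.

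First, your Cauchy--Schwarz step $E_{ij}E^{ij}\geqslant \tfrac{u^2}{|\nabla u|^2}E_iE^i$ is weaker than the one actually needed. Since $E_{ij}$ is \emph{trace-free}, one has the sharper bound $|\nabla u|^2E_{ij}E^{ij}\geqslant \tfrac{n}{n-1}\,u^2E_iE^i$ (this is the inequality borrowed from \cite[Lemma~2.2]{MOW25}), which produces the $(1,1)$-entry $A_{11}=\tfrac{n}{n-1}\big(c_1+\tfrac{2c_2}{n-4}\big)+2c_1$ used in the paper, not your smaller $3c_1+\tfrac{2c_2}{n-4}$. The subsequent determinant estimates are calibrated to this larger $A_{11}$, and with your value the positivity on the full range $0<\alpha<\tfrac{n+4}{n-4}$ would have to be rechecked from scratch.

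Second --- and this is the real gap --- you explicitly ``grant'' the positive definiteness of $M$. In the paper this is not a formality but the bulk of the argument: the $2\times 2$ and $3\times 3$ principal minors are computed in closed form as
\[
\begin{vmatrix}A_{11}&A_{12}\\A_{12}&1\end{vmatrix}=\tfrac{(n-2)^2}{2(n-1)^2(n-4)^2}\,f_1\!\Big(\tfrac{(n-4)\alpha}{(n-2)(n+4)}\Big),\qquad
\det A=\tfrac{n\alpha^2}{64(n-1)^2(n+4)^2}\Big(\tfrac{1}{n-4}-\tfrac{\alpha}{n+4}\Big)f_2\!\Big(\tfrac{\alpha}{n+4}\Big),
\]
for explicit cubics/quartics $f_1,f_2$, and each is shown positive on the relevant interval by an elementary but nontrivial case analysis (including a further reduction $f_2>(n-2)f_3$ and a split $5\leqslant n\leqslant 12$ versus $n\geqslant 13$). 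The subcriticality hypothesis enters exactly through the visible factor $\tfrac{1}{n-4}-\tfrac{\alpha}{n+4}$ in $\det A$, so this is precisely the step that cannot be waved through; your proposal identifies the crux correctly but does not carry it out.
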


\begin{proof}
    By nonnegativity of Ricci curvature and Proposition \ref{prop:est}, we obtain
    \[R_{ij}u^iu^j\geqslant 0,\quad-\Delta u\geqslant\frac{2}{n-4}\frac{|\nabla u|^2}{u^2}.\]
    Similar as the proof in \cite[Lemma 2.2]{MOW25}, we have \(|\nabla u|^2 E_{ij} E^{ij}\geqslant \frac 4 3 E_i E^i\). Obviously, \(c_2>0\). Thus the identity \eqref{identity} yields
    \begin{align}
        \begin{split}\label{ineq:id-1}
            &u^{\frac{2\alpha}{n+4}}\Big\{u^{-\frac{2\alpha}{n+4}}\Big[\Big(c_1\frac{|\nabla u|^2}{u}-c_2 \Delta u\Big)E_i+\Big(\frac{n+2}{n+4}\alpha\frac{|\nabla u|^2}{u}+\Delta u\Big)F_i\\
            &~~~~~~~~~~~~~~~~~~~~~{-Gu_i+\frac{n\alpha}{2(n+4)}(1+\frac{n\alpha}{n+4})(1-\frac{n-4}{n+4}\alpha)\frac{|\nabla u|^4}{u^3}u_i\Big]\Big\}_{,}}^i\\
            \geqslant~&\Big(E_i,F_i,\frac{|\nabla u|^2}{u^2}u_i\Big)A\Big(E^i,F^i,\frac{|\nabla u|^2}{u^2}u^i\Big)^T,
        \end{split}
    \end{align}
    where the symmetric matrix \(A=(A_{kl})_{3\times3}\) satifies that \(A_{22}=1\),
    \begin{align*}
        &A_{11}=\frac{n}{n-1}(c_1+\frac{2c_2}{n-4})+2c_1\\
        =~&-\frac{n^2(3n-2)(3n-10)}{4(n-1)^2(n+4)^2}\alpha^2+\frac{4(2n^3-3n^2-8n+8)}{(n-1)^2(n-4)(n+4)}\alpha+\frac{(n+2)(9n^2-34n+24)}{4(n-1)^2(n-4)},
    \end{align*}
    and \(A_{12},A_{13},A_{23},A_{33}\) are defined in Proposition \ref{prop:id}.
    
    By straightforward computations, we have
    \[\begin{vmatrix}
        A_{11} & A_{12}\\
        A_{12} & A_{22}
    \end{vmatrix}=\frac{(n-2)^2}{2(n-1)^2(n-4)^2}f_1\Big(\frac{(n-4)\alpha}{(n-2)(n+4)}\Big),\]
    \[\det A=\frac{n\alpha^2}{64(n-1)^2(n+4)^2}(\frac{1}{n-4}-\frac{\alpha}{n+4})f_2(\frac{\alpha}{n+4}),\]
    where \(f_1(x)=-(5n^4-18n^3+2n^2+32)x^2+(n^3+16n^2-8n-64)x+4(n+2)(n-4),\)
    \begin{align*}
        f_2(x)=~&-n(n-4)(9n^5-48n^4+148n^3-112n^2+448n-256)x^3\\
        &-n(7n^5-224n^4+972n^3-1936n^2+1088n+256)x^2\\
        &+(n-2)(9n^4+118n^3-288n^2+96n+512)x+(n-2)^2(n-4)(7n+32).
    \end{align*}
    When \(x\in(0,\frac{1}{n-2})\), \(n\geqslant5\),
    \[f_1(x)>\min\Big\{f_1(0),f_1(\frac{1}{n-2})\Big\}=\min\Big\{4(n+2)(n-4),\frac{8n^3-26n^2+48n-32)}{(n-2)^2}\Big\}>0.\]
    When \(x\in(0,\frac{1}{n-4})\), \(n\geqslant5\), by \(x^3<\frac{1}{n-4}x^2\), we obtain \(f_2(x)>(n-2)f_3(x)\) with
    \begin{align*}
        f_3(x)=~&-16n^2(n-12)(n^2-3n+4)x^2\\
        &+(9n^4+118n^3-288n^2+96n+512)x+(n-2)(n-4)(7n+32).
    \end{align*}
    If \(5\leqslant n\leqslant 12\), then
    \[f_3(x)\geqslant(9n^4+118n^3-288n^2+96n+512)x+(n-2)(n-4)(7n+32)>0.\]
    If \(n\geqslant 13\), then
    \begin{align*}
        f_3(x)
        >~&\min\Big\{f_3(0),f_3(\frac{1}{n-4})\Big\}\\
        =~&\min\Big\{(n-2)(n-4)(7n+32),\frac{64(n-2)^2(4n^3-13n^2+24n-16)}{(n-4)^2}\Big\}>0.
    \end{align*}
    Thus, when \(0<\alpha<\frac{n+4}{n-4}\) and \(n\geqslant5\), the matrix \(A\) is positive definite. Then \eqref{ineq:id-1} yields
    \begin{align}
        \begin{split}\label{ineq:id-2}
            &u^{\frac{2\alpha}{n+4}}\Big\{u^{-\frac{2\alpha}{n+4}}\Big[\Big(c_1\frac{|\nabla u|^2}{u}-c_2 \Delta u\Big)E_i+\Big(\frac{n+2}{n+4}\alpha\frac{|\nabla u|^2}{u}+\Delta u\Big)F_i\\
            &~~~~~~~~~~~~~~~~~~~~~{-Gu_i+\frac{n\alpha}{2(n+4)}(1+\frac{n\alpha}{n+4})(1-\frac{n-4}{n+4}\alpha)\frac{|\nabla u|^4}{u^3}u_i\Big]\Big\}_{,}}^i\\
            \geqslant~&\lambda_{\min}(A)\Big(E_iE^i+F_iF^i+\frac{|\nabla u|^6}{u^4}\Big),
        \end{split}
    \end{align}
    where \(\lambda_{\min}(A)\), depending on \(n\) and \(\alpha\), is the minimal eigenvalue of \(A\).

    By Proposition \ref{prop:invariant} and Cauchy's inequality, we obtain
    \begin{align*}
        u^{\frac{2\alpha}{n+4}}{[u^{-\frac{2\alpha}{n+4}-2}\Delta u|\nabla u|^2u_i]_,}^i=~&\frac{n+2}{n}\frac{(\Delta u)^2|\nabla u|^2}{u^2}+\frac 1 2(\frac{3n-4}{n+4}\alpha-1)\frac{\Delta u|\nabla u|^4}{u^3}\\
        &+\frac 1 4(1+\frac{n\alpha}{n+4})(\frac{n-2}{n+4}\alpha-\frac{n+2}{n})\frac{|\nabla u|^6}{u^4}+2\Delta uE+\frac{|\nabla u|^2}{u}F\\
        \geqslant~&\frac{(\Delta u)^2|\nabla u|^2}{u^2}-C\Big(E_iE^i+F_iF^i+\frac{|\nabla u|^6}{u^4}\Big).
    \end{align*}
    By the proof of Proposition \ref{prop:id}, the definition of \(G\), and Cauchy's inequality, we obtain
    \begin{align*}
        &u^{\frac{2\alpha}{n+4}}{[u^{-\frac{2\alpha}{n+4}}(-\Delta u F_i+Gu_i)]_{,}}^i\\
        \geqslant~&\frac{n+2}{n+4}\alpha\frac{|\nabla u|^2}{u}G-C\Big(E_iE^i+F_iF^i+\frac{|\nabla u|^6}{u^4}+\frac{(\Delta u)^2|\nabla u|^2}{u^2}\Big)\\
        \geqslant~&\frac{n+2}{n+4}\alpha\frac{|\nabla u|^2}{u}\Delta^2u-C\Big(E_iE^i+F_iF^i+\frac{|\nabla u|^6}{u^4}+\frac{(\Delta u)^2|\nabla u|^2}{u^2}\Big)
    \end{align*}
    Combining with \eqref{ineq:id-2} by choosing \(\delta_1>0\) and \(\frac{\delta_2}{\delta_1}>0\) small enough, \eqref{inequality} holds.
\end{proof}
\section{Integral estimates: proof of Theorem \ref{thm:subcritical}}\label{sec:subcritical}

In this section, thanks to preparations in Section \ref{sec:identity}, we prove Theorem \ref{thm:subcritical} by establishing some integral estimates. Denote \(\varepsilon\) as a sufficiently small positive constant independent of \(R\), and all other symbols are the same as in Section \ref{sec:identity}.

\begin{lemma}\label{lem:int-1}
    For any \(\gamma\geqslant6\), there exists a constant \(C>0\) depending on \(n,\alpha,\gamma\) such that
    \begin{align*}
        R^{-2}\int_M u^{-\frac{2\alpha}{n+4}+1}\Delta^2u\eta^{\gamma-2}\leqslant~&\varepsilon\int_M u^{-\frac{2\alpha}{n+4}}\Big(E_iE^i+F_iF^i+\frac{|\nabla u|^6}{u^4}\Big)\eta^\gamma\\
        &+CR^{-6}\int_M u^{-\frac{2\alpha}{n+4}+2}\eta^{\gamma-6}+CR^{-2}\int_M u^{-\frac{2\alpha}{n+4}}(\Delta u)^2\eta^{\gamma-2}.
    \end{align*}
\end{lemma}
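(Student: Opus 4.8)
The plan is to integrate by parts twice, moving the bi-Laplacian off of $u$ and onto the test function $\phi:=u^{-\frac{2\alpha}{n+4}+1}\eta^{\gamma-2}$. Since $\eta$ is supported in $\overline{B_{2R}}$ (compact by completeness of $M$) and $u\in C^4$ is positive with $\gamma\geqslant6$, $\phi$ is a compactly supported $C^2$ function, so Green's identity gives
\[\int_M u^{-\frac{2\alpha}{n+4}+1}\Delta^2u\,\eta^{\gamma-2}=\int_M(\Delta\phi)\,\Delta u\]
with no boundary term. Expanding $\Delta\phi$ by the product and chain rules and abbreviating $\beta:=\frac{2\alpha}{n+4}$, one gets five summands inside $\int_M(\Delta\phi)\Delta u$: a ``main'' term $(1-\beta)u^{-\beta}(\Delta u)^2\eta^{\gamma-2}$, a ``gradient'' term $-\beta(1-\beta)u^{-\beta-1}|\nabla u|^2\Delta u\,\eta^{\gamma-2}$, and three ``cutoff'' terms carrying the factors $\langle\nabla u,\nabla\eta\rangle\,\eta^{\gamma-3}$, $\Delta\eta\,\eta^{\gamma-3}$, and $|\nabla\eta|^2\eta^{\gamma-4}$. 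Multiplying through by $R^{-2}$, I then bound the five terms one by one.

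The main term is already $CR^{-2}\int_M u^{-\beta}(\Delta u)^2\eta^{\gamma-2}$, i.e. the third term on the right. The crucial input for the rest is Proposition \ref{prop:est}, which applies here because $\Delta^2u=u^\alpha\geqslant0$ and, under the hypothesis of Theorem \ref{thm:subcritical}, $\Delta u\leqslant0$; it gives $|\nabla u|^2\leqslant\tfrac{n-4}{2}\,u\,|\Delta u|$. Plugging this into the gradient term converts $u^{-\beta-1}|\nabla u|^2|\Delta u|$ into $\leqslant Cu^{-\beta}(\Delta u)^2$, so it is again absorbed into $CR^{-2}\int_M u^{-\beta}(\Delta u)^2\eta^{\gamma-2}$.

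For the two cutoff terms with $\Delta\eta$ and $|\nabla\eta|^2$, using $|\Delta\eta|\leqslant CR^{-2}$, $|\nabla\eta|^2\leqslant CR^{-2}$, and $\eta^{\gamma-3}\leqslant\eta^{\gamma-4}$ (as $0\leqslant\eta\leqslant1$), each contributes at most $CR^{-4}\int_M u^{1-\beta}|\Delta u|\eta^{\gamma-4}$, and Young's inequality in the form
\[R^{-4}u^{1-\beta}|\Delta u|\eta^{\gamma-4}=\big(R^{-1}u^{-\beta/2}|\Delta u|\eta^{(\gamma-2)/2}\big)\big(R^{-3}u^{1-\beta/2}\eta^{(\gamma-6)/2}\big)\leqslant\tfrac12R^{-2}u^{-\beta}(\Delta u)^2\eta^{\gamma-2}+\tfrac12R^{-6}u^{2-\beta}\eta^{\gamma-6}\]
splits it between the last two right-hand members; this is where $\gamma\geqslant6$ is used, so that $\eta^{\gamma-6}$ makes sense. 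For the mixed cutoff term, $|\nabla\eta|\leqslant CR^{-1}$ together with the factor $R^{-2}$ leaves $\leqslant CR^{-3}\int_M u^{-\beta}|\Delta u||\nabla u|\eta^{\gamma-3}$; applying Proposition \ref{prop:est} once more as $|\nabla u|\leqslant Cu^{1/2}|\Delta u|^{1/2}$ gives the integrand $CR^{-3}u^{1/2-\beta}|\Delta u|^{3/2}\eta^{\gamma-3}$, and Young's inequality with exponents $4/3$ and $4$ distributes this, too, between $CR^{-2}\int_M u^{-\beta}(\Delta u)^2\eta^{\gamma-2}$ and $CR^{-6}\int_M u^{2-\beta}\eta^{\gamma-6}$. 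Summing the five bounds proves the lemma; the nonnegative term $\varepsilon\int_M u^{-\frac{2\alpha}{n+4}}(E_iE^i+F_iF^i+|\nabla u|^6/u^4)\eta^\gamma$ is not actually needed here and is retained only for uniformity with the later estimates.

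The main ``obstacle'' is purely bookkeeping: in each use of Young's inequality one must choose the split of the weights $u,\eta$ and of the powers of $R$ so that every error lands in exactly one of the three admissible right-hand terms, keeping all $\eta$-exponents nonnegative (which forces $\gamma\geqslant6$) and using $0\leqslant\eta\leqslant1$ to match mismatched exponents like $\gamma-3$ versus $\gamma-4$. The only non-mechanical ingredient is invoking Proposition \ref{prop:est} to trade a power of $|\nabla u|^2$ for $u|\Delta u|$, which is exactly what makes the gradient and mixed terms controllable by the Laplacian integral.
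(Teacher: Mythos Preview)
Your proof is correct but takes a genuinely different route from the paper. The paper works through the invariant-tensor machinery: it integrates the divergence $\big(u^{-\frac{2\alpha}{n+4}+1}F_i\big)_{,}{}^{i}$, uses the relation ${F_{i,}}^{i}=\text{const}\cdot E+\text{const}\cdot F+G$ from Proposition~\ref{prop:invariant} and the fact that $G$ contains $\Delta^{2}u$, and then tests against $\eta^{\gamma-2}$. This produces error terms of the shape $|E_i||u^i|$ and $|F_i||u^i|$, which must be Young'd into the $\varepsilon\!\int u^{-\frac{2\alpha}{n+4}}(E_iE^i+F_iF^i)\eta^{\gamma}$ piece, so in the paper's argument the $\varepsilon$-term is genuinely used. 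You instead apply Green's identity directly to $\int\phi\,\Delta^{2}u=\int(\Delta\phi)\Delta u$ and invoke Proposition~\ref{prop:est} to trade $|\nabla u|^{2}$ for $u|\Delta u|$ in the gradient and mixed cutoff terms; every error then lands in the $(\Delta u)^{2}$ and $u^{2-\beta}$ integrals, and the $\varepsilon$-term is indeed superfluous, so you actually obtain a slightly stronger inequality. The trade-off is that your argument imports Proposition~\ref{prop:est} (hence the hypothesis $\Delta u\leqslant0$) into this lemma, whereas the paper's proof of Lemma~\ref{lem:int-1} is independent of that estimate; since the lemma is only ever used inside the proof of Theorem~\ref{thm:subcritical}, where $\Delta u\leqslant0$ is assumed anyway, this costs nothing.
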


\begin{proof}
    By Proposition \ref{prop:invariant}, the definition of \(G\), and Cauchy's inequality, we obtain
    \begin{align*}
        u^{\frac{2\alpha}{n+4}}{(u^{-\frac{2\alpha}{n+4}+1}F_i)_,}^i=~&uG+\frac 1 2(1+\frac{n\alpha}{n+4})(\frac{n+2}{n}-\frac{n-2}{n+4}\alpha)uE-\frac{n+2}{2n}(1+\frac{n\alpha}{n+4})uF\\
        \geqslant~&u\Delta^2 u-C\Big((|E_i|+|F_i|)|u^i|+\frac{|\nabla u|^4}{u^2}+(\Delta u)^2\Big).
    \end{align*}
    By testing \(u^{-\frac{2\alpha}{n+4}}\eta^{\gamma-2}\) on it, and applying Young's inequality, we obtain
    \begin{align*}
        R^{-2}\int_M u^{-\frac{2\alpha}{n+4}+1}\Delta^2u\eta^{\gamma-2}\leqslant~&CR^{-2}\int_M u^{-\frac{2\alpha}{n+4}}\Big((|E_i|+|F_i|)|u^i|+\frac{|\nabla u|^4}{u^2}+(\Delta u)^2\Big)\eta^{\gamma-2}\\
        &+CR^{-2}\int_M u^{-\frac{2\alpha}{n+4}+1}|F^i|\eta^{\gamma-3}|\eta_{,i}|.
    \end{align*}
    Thus, we complete the proof by Young's inequality.
\end{proof}

\begin{lemma}\label{lem:int-2}
    For any \(\gamma\geqslant6\), there exists a constant \(C>0\) depending on \(n,\alpha,\gamma\) such that
    \[R^{-2}\int_M u^{-\frac{2\alpha}{n+4}}(\Delta u)^2\eta^{\gamma-2}\leqslant\varepsilon\int_M u^{-\frac{2\alpha}{n+4}}\Big(F_iF^i+\frac{|\nabla u|^6}{u^4}\Big)\eta^\gamma+CR^{-6}\int_M u^{-\frac{2\alpha}{n+4}+2}\eta^{\gamma-6}.\]
\end{lemma}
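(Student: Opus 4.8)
The plan is to integrate $(\Delta u)^2$ by parts once, against the weight $u^{-\frac{2\alpha}{n+4}}\eta^{\gamma-2}$. Starting from the pointwise identity
\[(\Delta u)^2=\operatorname{div}\big((\Delta u)\nabla u\big)-\langle\nabla\Delta u,\nabla u\rangle,\]
I would multiply by $R^{-2}u^{-\frac{2\alpha}{n+4}}\eta^{\gamma-2}$ and integrate over $M$; since $\eta$ has compact support and $u$ is $C^4$, the divergence theorem gives
\[R^{-2}\int_M u^{-\frac{2\alpha}{n+4}}(\Delta u)^2\eta^{\gamma-2}=-R^{-2}\int_M\Delta u\,\big\langle\nabla u,\nabla\big(u^{-\frac{2\alpha}{n+4}}\eta^{\gamma-2}\big)\big\rangle-R^{-2}\int_M u^{-\frac{2\alpha}{n+4}}\eta^{\gamma-2}\langle\nabla\Delta u,\nabla u\rangle.\]
Contracting the formula for $F_j$ in Proposition \ref{prop:invariant} with $u^j$ yields
\[\langle\nabla\Delta u,\nabla u\rangle=F_iu^i+\frac{n+2}{2n}\Big(1+\frac{n\alpha}{n+4}\Big)\frac{\Delta u}{u}|\nabla u|^2-\frac14\Big(1+\frac{n\alpha}{n+4}\Big)\Big(\frac{n+2}{n}-\frac{n-2}{n+4}\alpha\Big)\frac{|\nabla u|^4}{u^2},\]
and expanding $\nabla(u^{-\frac{2\alpha}{n+4}}\eta^{\gamma-2})$ produces a term proportional to $u^{-\frac{2\alpha}{n+4}-1}|\nabla u|^2\eta^{\gamma-2}$ together with a cutoff term proportional to $u^{-\frac{2\alpha}{n+4}}\eta^{\gamma-3}\nabla\eta$.

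Next I would classify the terms that arise. The $F_iu^i$ term is dealt with by Young's inequality,
\[R^{-2}u^{-\frac{2\alpha}{n+4}}\eta^{\gamma-2}|F_i||u^i|\leqslant\varepsilon u^{-\frac{2\alpha}{n+4}}\eta^{\gamma}F_iF^i+CR^{-4}u^{-\frac{2\alpha}{n+4}}\eta^{\gamma-4}|\nabla u|^2.\]
The term $\frac{2\alpha}{n+4}R^{-2}\int_M u^{-\frac{2\alpha}{n+4}-1}\Delta u|\nabla u|^2\eta^{\gamma-2}$ coming from differentiating the power of $u$ is nonpositive because $\Delta u\leqslant 0$ by hypothesis, so it is simply discarded. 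The genuinely delicate contributions are $-\frac{n+2}{2n}(1+\frac{n\alpha}{n+4})R^{-2}\int_M u^{-\frac{2\alpha}{n+4}-1}\Delta u|\nabla u|^2\eta^{\gamma-2}$ and the cutoff term $-(\gamma-2)R^{-2}\int_M u^{-\frac{2\alpha}{n+4}}\Delta u\,\eta^{\gamma-3}\langle\nabla u,\nabla\eta\rangle$; both are split by Young's inequality with a small parameter into a multiple of $R^{-2}\int_M u^{-\frac{2\alpha}{n+4}}(\Delta u)^2\eta^{\gamma-2}$, which will be moved back to the left-hand side, plus multiples of $R^{-2}\int_M u^{-\frac{2\alpha}{n+4}}\frac{|\nabla u|^4}{u^2}\eta^{\gamma-2}$ and $R^{-4}\int_M u^{-\frac{2\alpha}{n+4}}|\nabla u|^2\eta^{\gamma-4}$. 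The $\frac{|\nabla u|^4}{u^2}$ term arising directly from $\langle\nabla\Delta u,\nabla u\rangle$ is already of this last type.

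Finally I would absorb the two low-order integrals into the right-hand side by Young's inequality with exponents $3$ and $\frac32$: on the one hand
\[R^{-4}u^{-\frac{2\alpha}{n+4}}|\nabla u|^2\eta^{\gamma-4}\leqslant\varepsilon u^{-\frac{2\alpha}{n+4}}\frac{|\nabla u|^6}{u^4}\eta^{\gamma}+CR^{-6}u^{-\frac{2\alpha}{n+4}+2}\eta^{\gamma-6},\]
and symmetrically (with exponents $\frac32$ and $3$) the same bound holds for $R^{-2}u^{-\frac{2\alpha}{n+4}}\frac{|\nabla u|^4}{u^2}\eta^{\gamma-2}$. This is exactly where $\gamma\geqslant 6$ is needed, so that $\eta^{\gamma},\eta^{\gamma-2},\eta^{\gamma-4},\eta^{\gamma-6}$ are all honest nonnegative powers. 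Choosing the various Young parameters small enough that the coefficient ultimately multiplying $R^{-2}\int_M u^{-\frac{2\alpha}{n+4}}(\Delta u)^2\eta^{\gamma-2}$ on the right stays below $1$, and transferring that term to the left, gives the claimed inequality.

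The main obstacle is precisely this absorption step: integrating $(\Delta u)^2$ by parts unavoidably regenerates $(\Delta u)^2$ on the right, through the $\frac{\Delta u}{u}|\nabla u|^2$ part of $\langle\nabla\Delta u,\nabla u\rangle$ --- which, since $\Delta u\leqslant0$, has the sign that prevents us from discarding it --- and through the cutoff commutator term. One has to check that applying Young's inequality with a sufficiently small parameter produces a coefficient on $R^{-2}\int_M u^{-\frac{2\alpha}{n+4}}(\Delta u)^2\eta^{\gamma-2}$ that is strictly less than $1$; once that is secured, everything else is routine bookkeeping of the powers of $u$, $\eta$, and $R$.
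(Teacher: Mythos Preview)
Your proof is correct and follows essentially the same approach as the paper: both integrate by parts against the weight \(u^{-\frac{2\alpha}{n+4}}\eta^{\gamma-2}\), replace \(\langle\nabla\Delta u,\nabla u\rangle\) via \(F_i\), absorb the regenerated \((\Delta u)^2\) term by Cauchy/Young with a small parameter, and finish by Young on the \(|\nabla u|^4/u^2\) and \(|\nabla u|^2\) remainders. The paper organizes this as a pointwise inequality \(u^{\frac{2\alpha}{n+4}}\operatorname{div}\big(u^{-\frac{2\alpha}{n+4}}\Delta u\,\nabla u\big)\geqslant\tfrac12(\Delta u)^2-C\big(|F_i||u^i|+\frac{|\nabla u|^4}{u^2}\big)\) before testing with \(\eta^{\gamma-2}\) (and does not invoke \(\Delta u\leqslant 0\)), whereas you integrate first and then estimate term by term; this difference is purely cosmetic.
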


\begin{proof}
    By Proposition \ref{prop:invariant} and Cauchy's inequality, we obtain
    \begin{align*}
        u^{\frac{2\alpha}{n+4}}{(u^{-\frac{2\alpha}{n+4}}\Delta uu_i)_,}^i=~&(\Delta u)^2+uF+\frac 1 2(\frac{n-2}{n+4}\alpha+\frac{n+2}{n})\frac{|\nabla u|^2}{u}\Big(\Delta u+\frac 1 3(1+\frac{n\alpha}{n+4})\frac{|\nabla u|^2}{u}\Big)\\
        \geqslant~&\frac 1 2(\Delta u)^2-C\Big(|F_i||u^i|+\frac{|\nabla u|^4}{u^2}\Big).
    \end{align*}
    By testing \(u^{-\frac{2\alpha}{n+4}}\eta^{\gamma-2}\) on it, and applying Young's inequality, we obtain
    \begin{align*}
        &R^{-2}\int_M u^{-\frac{2\alpha}{n+4}}(\Delta u)^2\eta^{\gamma-2}\\
        \leqslant~&CR^{-2}\int_M u^{-\frac{2\alpha}{n+4}}\Big(|F_i||u^i|+\frac{|\nabla u|^4}{u^2}\Big)\eta^{\gamma-2}+CR^{-3}\int_M u^{-\frac{2\alpha}{n+4}}\Delta u|\nabla u|\eta^{\gamma-3}\\
        \leqslant~&\frac 1 2R^{-2}\int_M u^{-\frac{2\alpha}{n+4}}(\Delta u)^2\eta^{\gamma-2}+\varepsilon\int_M u^{-\frac{2\alpha}{n+4}}\Big(F_iF^i+\frac{|\nabla u|^6}{u^4}\Big)\eta^\gamma+CR^{-6}\int_M u^{-\frac{2\alpha}{n+4}+2}\eta^{\gamma-6}.
    \end{align*}
    Thus, we complete the proof.
\end{proof}

\begin{proof}[\normalfont\bfseries Proof of Theorem \ref{thm:subcritical}]
    Let \(\gamma\geqslant6\) be a constant to be determined. Testing \(\eta^\gamma\) on \eqref{inequality} yields
    \begin{align}
        \begin{split}\label{int:1}
            &\int_M u^{-\frac{2\alpha}{n+4}}\Big(E_iE^i+F_iF^i+\frac{|\nabla u|^6}{u^4}+\frac{(\Delta u)^2|\nabla u|^2}{u^2}+\frac{|\nabla u|^2}{u}\Delta^2u\Big)\eta^\gamma\\
            \leqslant~&C\int_M u^{-\frac{2\alpha}{n+4}}\Big[\Big(\frac{|\nabla u|^2}{u}-\Delta u\Big)(|E^i|+|F^i|)+\Big(|G|+\frac{|\nabla u|^4}{u^3}+\frac{\Delta u|\nabla u|^2}{u^2}\Big)|u^i|\Big]\eta^{\gamma-1}|\eta_{,i}|.
        \end{split}
    \end{align}
    
    By Young's inequality and the definition of \(G\), we obtain
    \begin{align*}
        &\int_M u^{-\frac{2\alpha}{n+4}}\Big(\frac{|\nabla u|^2}{u}-\Delta u\Big)(|E^i|+|F^i|)\eta^{\gamma-1}|\eta_{,i}|\\
        \leqslant~&\varepsilon\int_M u^{-\frac{2\alpha}{n+4}}(E_iE^i+F_iF^i)\eta^\gamma+CR^{-2}\int_M u^{-\frac{2\alpha}{n+4}}\Big(\frac{|\nabla u|^4}{u^2}+(\Delta u)^2\Big)\eta^{\gamma-2}\\
        \leqslant~&\varepsilon\int_M u^{-\frac{2\alpha}{n+4}}\Big(E_iE^i+F_iF^i+\frac{|\nabla u|^6}{u^4}\Big)\eta^\gamma\\
        &+CR^{-6}\int_M u^{-\frac{2\alpha}{n+4}+2}\eta^{\gamma-6}+CR^{-2}\int_M u^{-\frac{2\alpha}{n+4}}(\Delta u)^2\eta^{\gamma-2},
    \end{align*}
    \begin{align*}
        &\int_M u^{-\frac{2\alpha}{n+4}}\Big(|G|+\frac{|\nabla u|^4}{u^3}+\frac{\Delta u|\nabla u|^2}{u^2}\Big)|u^i|\eta^{\gamma-1}|\eta_{,i}|\\
        \leqslant~&\varepsilon\int_M u^{-\frac{2\alpha}{n+4}}\Big(\frac{|\nabla u|^6}{u^4}+\frac{(\Delta u)^2|\nabla u|^2}{u^2}+\frac{|\nabla u|^2}{u}\Delta^2u\Big)\eta^\gamma\\
        &+CR^{-6}\int_M u^{-\frac{2\alpha}{n+4}+2}\eta^{\gamma-6}+CR^{-2}\int_M u^{-\frac{2\alpha}{n+4}+1}\Delta^2u\eta^{\gamma-2}.
    \end{align*}
    Inserting them into \eqref{int:1}, we have
    \begin{align*}
        &\int_M u^{-\frac{2\alpha}{n+4}}\Big(E_iE^i+F_iF^i+\frac{|\nabla u|^6}{u^4}+\frac{(\Delta u)^2|\nabla u|^2}{u^2}+\frac{|\nabla u|^2}{u}\Delta^2u\Big)\eta^\gamma\\
        \leqslant~&CR^{-6}\int_M u^{-\frac{2\alpha}{n+4}+2}\eta^{\gamma-6}+CR^{-2}\int_M u^{-\frac{2\alpha}{n+4}}(\Delta u)^2\eta^{\gamma-2}+CR^{-2}\int_M u^{-\frac{2\alpha}{n+4}+1}\Delta^2u\eta^{\gamma-2}.
    \end{align*}
    Combining Lemma \ref{lem:int-1}, \ref{lem:int-2}, and the above inequality together, it yields
    \begin{equation}\label{int:2}
        \int_M u^{-\frac{2\alpha}{n+4}}\Big(E_iE^i+F_iF^i+\frac{|\nabla u|^6}{u^4}\Big)\eta^\gamma\leqslant CR^{-6}\int_M u^{-\frac{2\alpha}{n+4}+2}\eta^{\gamma-6}.
    \end{equation}
    By Lemma \ref{lem:int-1}, \ref{lem:int-2}, and inequality \eqref{int:2}, we obtain
    \begin{equation}\label{int:3}
        R^{-2}\int_M u^{-\frac{2\alpha}{n+4}+1}\Delta^2u\eta^{\gamma-2}+\int_M u^{-\frac{2\alpha}{n+4}-4}|\nabla u|^6\eta^\gamma\leqslant CR^{-6}\int_M u^{-\frac{2\alpha}{n+4}+2}\eta^{\gamma-6}.
    \end{equation}
    
    When \(1<\alpha<\frac{n+4}{n-4}\), choose \(\gamma=\max\Big\{6,\frac{1}{\alpha-1}(\frac{6n+16}{n+4}\alpha-2)\Big\}\), by Young's inequality,
    \[R^{-6}\int_M u^{-\frac{2\alpha}{n+4}+2}\eta^{\gamma-6}\leqslant\varepsilon R^{-2}\int_M u^{\frac{n+2}{n+4}\alpha+1}\eta^{\gamma-2}+CR^{-\frac{1}{\alpha-1}(\frac{6n+16}{n+4}\alpha+2)}\operatorname{Vol}(B_{2R}).\]
    By Bishop-Gromov comparison theorem, \(\operatorname{Vol}(B_{2R})\leqslant CR^n\), thus \eqref{int:3} yields
    \[\int_M u^{-\frac{2\alpha}{n+4}-4}|\nabla u|^6\eta^\gamma\leqslant CR^{n-\frac{1}{\alpha-1}(\frac{6n+16}{n+4}\alpha+2)}=CR^{\frac{(n^2-2n-16)\alpha-(n+2)(n+4)}{(n+4)(\alpha-1)}}.\]
    Notice that \(\frac{(n^2-2n-16)\alpha-(n+2)(n+4)}{(n+4)(\alpha-1)}<\frac{-8}{(n-4)(\alpha-1)}<0\). By letting \(R\to\infty\), we obtain that \(u\) is zero, which contradicts with \(u>0\). 
\end{proof}

\textbf{Acknowledgments.} Xi-Nan Ma was supported by the National Natural Science Foundation of China (Grant No. 12141105) and the National Key Research and Development Project (Grant No. SQ2020YFA070080). Tian Wu was supported by Anhui Postdoctoral Scientific Research Program Foundation (Grant No. 2025B1055).

%\printbibliography[heading=bibintoc, title=\ebibname]
%\appendix

\footnotesize{
    Welcome contact us:
    \begin{itemize}
        \item Xi-Nan Ma, School of Mathematical Sciences, University of Science and Technology of China, Hefei, Anhui, 230026, People's Republic of China. Email: \emph{xinan@ustc.edu.cn}
        \item Tian Wu, School of Mathematical Sciences, University of Science and Technology of China, Hefei, Anhui, 230026, People's Republic of China. Email: \emph{wt1997@ustc.edu.cn}
        \item Wangzhe Wu, Institute of Mathematics, Academy of Mathematics and Systems Science, Chinese Academy of Sciences, Beijing, 100190, People's Republic of China. Email: \emph{wuwz18@mail.ustc.edu.cn}
    \end{itemize}
}

\end{document}